\renewcommand{\phi}{\varphi}
\newtheorem{base}{Base}[section]
\numberwithin{equation}{section}
\theoremstyle{plain}
\newtheorem{theorem}[base]{Theorem}
\newtheorem{lemma}[base]{Lemma}
\newtheorem{corollary}[base]{Corollary}
\theoremstyle{definition}
\newcommand{\R}{\mathbb{R}}
\newcommand{\N}{\mathbb{N}}
\renewcommand{\d}{\,\mathrm{d}}
\newcommand{\laplace}{\Delta}
\newcommand{\grad}{\nabla}
\renewcommand{\div}{\nabla \cdot}
\renewcommand{\L}[1]{{L^{#1}(\Omega)}}
\newcommand{\defs}{\coloneqq}
\newcommand{\sfed}{\eqqcolon}
\newcommand{\stext}[1]{\;\;\text{ #1 }\;\;}
\newcommand{\eps}{\varepsilon}
\newcommand{\tmaxeps}{T_{\mathrm{max}, \eps}}
\newcommand{\ue}{u_\eps}
\newcommand{\ve}{v_\eps}
\newcommand{\uet}{u_{\eps t}}
\newcommand{\vet}{v_{\eps t}}
\newcommand{\Mp}{\mathcal{M}_+(\overline{\Omega})}
\newif\ifclarification
\newcommand\numberthis{\addtocounter{equation}{1}\tag{\theequation}}
\g@addto@macro\bfseries{\boldmath}
\title{Can a chemotaxis-consumption system recover from a measure-type aggregation state in arbitrary dimension?}
\author{
	Frederic Heihoff\footnote{fheihoff@math.uni-paderborn.de}\\
	{\small Institut f\"ur Mathematik, Universit\"at Paderborn,}\\
	{\small 33098 Paderborn, Germany}
}
\date{}
\begin{document}

\maketitle

\begin{abstract}
\noindent
We consider the chemotaxis-consumption system 
\[
	\left\{
		\begin{aligned}
			u_t &= \Delta u - \chi \nabla \cdot (u\nabla v) \\
			v_t &= \Delta v - uv 
		\end{aligned}
		\right.
\]
in a smooth bounded domain $\Omega \subseteq \mathbb{R}^n$, $n \geq 2$, with parameter $\chi > 0$ and Neumann boundary conditions. It is well known that, for sufficiently smooth nonnegative initial data and under a smallness condition for the initial state of $v$, solutions of the above system never blow up and are even globally bounded. Going in a sense a step further in this paper, we ask the question whether the system can even recover from an initial state that already resembles measure-type blowup. To answer this, we show that, given an arbitrarily large positive Radon measure $u_0$ with $u_0(\overline{\Omega}) > 0$ as the initial data for the first equation and a nonnegative $L^\infty(\Omega)$ function $v_0$ with 
\[
	0 < \|v_0\|_{L^{\infty}(\Omega)} < \frac{2}{3n\chi}
\] as initial data for the second equation, it is still possible to construct a global classic solution to the above system.\\[0.5em]
\textbf{Keywords:} Keller--Segel; consumption; measure-valued initial data; smooth solution \\
\textbf{MSC 2020:} 35Q92 (primary); 35K10;	35K55; 35A09; 35B65; 92C17
\end{abstract}

\section{Introduction}
Inspired by the seminal paper due to Keller and Segel from 1970 (cf.\ \cite{KellerInitiationSlimeMold1970}) studying the movement mechanism of \emph{dictyostelium discoideum} slime mold, recent decades have seen increasing exploration of similar partial differential equation models for the movement behavior of various biological organisms. Arguably the lion's share of models found in this lineage are still interested in the same chemotactic movement mechanism as the original case study, which is the directed movement of biological entities toward a chemical stimulus, or one of its close relatives (cf.\ \cite{BellomoMathematicalTheoryKeller2015}). A central question in the mathematical investigation of such models is whether global classical solutions always exist or whether there are initial states leading to finite-time blowup due the solution e.g.\ aggregating to a Dirac measure. Notably, the latter can be a desired property of model solutions if similar aggregation behavior was observed in experiments as is the case for the original Keller--Segel model. In this classic setting, it in fact depends on the dimension of the domain, the system parameters as well as some properties of the initial data whether global solutions always exist (cf.\ e.g.\ \cite{NagaiApplicationTrudingerMoserInequality1997}, \cite{WinklerAggregationVsGlobal2010}) or finite-time blowup occurs (cf.\ e.g.\ \cite{NagaiBlowupRadiallySymmetric1995}, \cite{WinklerFinitetimeBlowupHigherdimensional2013}).
\\[0.5em]
Building on the aforementioned blowup results for the Keller--Segel model, it has further been shown that in some cases the blowup state persists in the form of Dirac-type singularities even after blowup time (cf.\ \cite{BilerRadiallySymmetricSolutions2008}, \cite{LankeitFacingLowRegularity2020}). This suggests that recovery from blowup is not possible in these scenarios and naturally leads us to the following question: Do there exist sufficiently favorable scenarios in which model solutions can always recover from a blowup state? At least in some easier to handle versions of the Keller--Segel system, such a recovery from blowup has in fact been shown to be possible (\cite{HeihoffDoesStrongRepulsion2022}, \cite{WinklerInstantaneousRegularizationDistributions2019}). 
\\[0.5em]
Given this precedent, we now turn our attention to a closely related setting to investigate the same question for the chemotaxis-consumption model
\begin{equation}\label{problem}
	\left\{
	\begin{aligned}
		u_t &= \laplace u - \chi \div (u\grad v) && \text{ on } \Omega \times (0,\infty) \\
		v_t &= \laplace v - uv && \text{ on } \Omega \times (0,\infty) \\
		0 &= \grad u \cdot \nu = \grad v \cdot \nu && \text{ on } \partial \Omega \times (0,\infty)
	\end{aligned}
	\right.
\end{equation}
in a smooth bounded domain $\Omega \subseteq \R^n$, $n \geq 2$, with $\chi > 0$. It differs from the classic Keller--Segel system only due to a change in the second equation, namely the production terms have been replaced by the consumption term $-uv$. 
While often found coupled to an additional fluid equation, systems of this type are e.g.\ used to model certain bacteria that are attracted by oxygen (cf.\ \cite{TuvalBacterialSwimmingOxygen2005}).
\paragraph{Prior work.} We will now give a brief overview of some prior work regarding both general results about the system (\ref{problem}) as well as results regarding recovery from blowup in other contexts. For a broader overlook regarding chemotaxis results, we refer the reader to the surveys \cite{BellomoMathematicalTheoryKeller2015} and \cite{LankeitDepletingSignalAnalysis2023}.
\\[0.5em]
Under a smallness assumption for the initial state of the second solution component, it has been shown that bounded global classical solutions to (\ref{problem}) with smooth initial data exist in domains of arbitrary dimension (cf.\ 
\cite{BaghaeiBoundednessClassicalSolutions2017}, \cite{TaoBoundednessChemotaxisModel2011}). Removing the initial data condition, a classical existence result of this type still holds in two-dimensional domains (cf.\ \cite{JiangGlobalExistenceAsymptotic2015}) and it has further been shown that weak solutions exist in three-dimensional domains, which eventually become smooth (cf.\ \cite{taoEventualSmoothnessStabilization2012}). There has also been extensive exploration of chemotaxis-consumption models with modified taxis or diffusion mechanisms (cf.\ e.g.\ \cite{CaoGlobalintimeBoundedWeak2014a}, \cite{LiGlobalSmalldataSolutions2015}, \cite{LiBoundednessStabilizationChemotaxis2021}, \cite{ZhangBoundednessChemotaxisSystems2016}), added production terms (cf.\ e.g.\ \cite{LankeitGlobalExistenceBoundedness2017a}, \cite{LiBoundednessStabilizationChemotaxis2021}), as well as models coupled with a fluid equation (cf.\ e.g.\ \cite{AhnGlobalClassicalSolutions2021}, \cite{winklerGlobalWeakSolutions2016}, \cite{WinklerSmallmassSolutionsTwodimensional2020} for some relevant existence theory and \cite{WinklerSmallmassSolutionsTwodimensional2020}, \cite{ZhangConvergenceRatesSolutions2015} for a discussion of long time behavior in this setting).
\\[0.5em]
Regarding recovery from blowup in chemotaxis settings, the landscape of results is to our knowledge still rather sparse. For the classic Keller--Segel system, smooth solutions with irregular initial data have been constructed in one-dimensional domains in \cite{WinklerInstantaneousRegularizationDistributions2019}, in two-dimensional domains for the attractive-repulsive version of the system under a strong repulsion assumption in \cite{HeihoffDoesStrongRepulsion2022}, or under the regularizing influence of an additional logistic source term in \cite{LankeitIrregularInitialData}. There have also been some efforts to construct mild solutions in two-dimensional domains with a small measure as initial data (cf.\ \cite{BilerLocalGlobalSolvability1998}). Further, there has been some exploration of this question in the whole space case (cf.\ \cite{BedrossianExistenceUniquenessLipschitz2014}, \cite{BilerCauchyProblemSelfsimilar1995}, \cite{BilerLocalGlobalSolvability1998}, \cite{BilerExistenceSolutionsKellerSegel2015}, \cite{RaczynskiStabilityPropertyTwodimensional2009}), radially symmetric settings (cf.\ \cite{WangImmediateRegularizationMeasuretype2021}, \cite{WinklerHowStrongSingularities2019}) as well as toroidal settings (cf.\ \cite{SenbaWeakSolutionsParabolicelliptic2002}).

\paragraph{Main result.} The main result of this paper is that, even with highly irregular initial data, it is still possible to construct classical solutions to (\ref{problem}) assuming that the initial data for the second equation are sufficiently small. More precisely letting $\Mp$ be the set of positive Radon measures on $\overline{\Omega}$ with the vague topology, we prove the following theorem:
\begin{theorem}\label{theorem:main}
	Let $\Omega \subseteq \R^n$, $n \geq 2$, be a bounded domain with a smooth boundary and let $\chi > 0$.
	\\[0.5em]
	Then for all initial data $u_0 \in \Mp$ with $u_0(\overline{\Omega}) > 0$ and nonnegative initial data $v_0 \in L^\infty(\Omega)$ with 
	\begin{equation}\label{v_0_condition}
		0 < \|v_0\|_\L{\infty} < \frac{2}{3n\chi},	
	\end{equation}
	there exist nonnegative functions $u, v \in C^{2,1}(\overline{\Omega}\times(0,\infty))$ such that $(u,v)$ is a classical solution of (\ref{problem}) attaining the initial data $(u_0, v_0)$ in the following fashion:
	\begin{align}
		u(\cdot, t) &\rightarrow u_0 &&\text{ in } \Mp \label{u_continuity}\\
		v(\cdot, t) &\rightarrow v_0 &&\text{ in } L^p(\Omega) \text{ for all } p \in (1,\infty) \label{v_continuity} 
	\end{align}
	as $t \searrow 0$.
\end{theorem}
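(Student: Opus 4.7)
I would proceed by a standard $\eps$-regularization scheme. First, pick smooth nonnegative approximations $(u_{0\eps})_{\eps \in (0,1)} \subseteq C^\infty(\overline{\Omega})$ with $u_{0\eps} \to u_0$ in $\Mp$ and $\|u_{0\eps}\|_\L{1}$ uniformly bounded, together with $(v_{0\eps})_{\eps \in (0,1)} \subseteq C^\infty(\overline{\Omega})$ such that $v_{0\eps} \to v_0$ in every $\L{p}$, $p \in (1,\infty)$, while retaining $\|v_{0\eps}\|_\L{\infty} < \frac{2}{3n\chi}$. By the classical existence and global boundedness theory under the smallness condition (\ref{v_0_condition}) (cf.\ \cite{TaoBoundednessChemotaxisModel2011}, \cite{BaghaeiBoundednessClassicalSolutions2017}), for each such $\eps$ the system (\ref{problem}) admits a global bounded classical solution $(\ue, \ve)$ starting from $(u_{0\eps}, v_{0\eps})$, and the plan is then to extract a limit as $\eps \searrow 0$.

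\paragraph{A priori estimates.}
The decisive step is to produce $\eps$-independent bounds on $(\ue, \ve)$ that are locally uniform on $\overline{\Omega} \times [\tau, T]$ for any $0 < \tau < T < \infty$. The elementary ingredients are mass conservation for the first equation, $\|\ue(\cdot, t)\|_\L{1} \equiv \|u_{0\eps}\|_\L{1}$, and the maximum principle for the second, $0 \leq \ve \leq \|v_{0\eps}\|_\L{\infty}$. To upgrade the $L^1$-information on $\ue$ to higher integrability for positive times, I would combine the Duhamel representation $\ue(\cdot, t) = e^{t\laplace} u_{0\eps} - \chi \int_0^t e^{(t-s)\laplace}\, \div\bigl(\ue(\cdot, s)\, \grad \ve(\cdot, s)\bigr) \d s$ with the standard $L^p$-$L^q$ smoothing estimates of the Neumann heat semigroup, iterated in Moser fashion. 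The smallness condition $\|v_0\|_\L{\infty} < \frac{2}{3n\chi}$ would be exploited through a Tao-type functional of the shape $\int_\Omega \ue^p + \int_\Omega |\grad \ve|^{2q}$ (cf.\ \cite{TaoBoundednessChemotaxisModel2011}), so that the factor $\|\ve\|_\L{\infty}$ appearing in the chemotactic cross terms is small enough to be absorbed into the dissipative contributions. Once $\L{p}$-bounds hold on $[\tau, T]$ for arbitrary $p \in (1,\infty)$, standard parabolic Schauder theory upgrades this to $\eps$-uniform $C^{2+\alpha, 1+\alpha/2}$-bounds on $\overline{\Omega} \times [\tau, T]$.

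\paragraph{Compactness and initial traces.}
An Arzel\`a--Ascoli argument then yields a subsequence along which $(\ue, \ve) \to (u, v)$ in $C^{2,1}_{\loc}(\overline{\Omega} \times (0, \infty))$, so that $(u,v)$ solves (\ref{problem}) classically on $\overline{\Omega} \times (0,\infty)$. To obtain (\ref{v_continuity}), I would combine the uniform $\L{\infty}$-bound on $\ve$ with a uniform $L^2((0,T); W^{1,2}(\Omega))$-bound derived by testing the second equation against $\ve$; standard strong-compactness arguments then yield $L^p$-continuity at $t = 0$. For (\ref{u_continuity}), testing the first equation against an arbitrary $\phi \in C^\infty(\overline{\Omega})$ produces a bound on $\big|\tfrac{d}{dt}\int_\Omega \ue \phi\big|$ that is integrable in $t$ near $0$ (thanks to the a priori estimates above), so that $\int_\Omega \ue(\cdot, t)\, \phi \to \int_{\overline{\Omega}} \phi \d u_0$ as $\eps, t \searrow 0$, which is exactly the claimed vague convergence.

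\paragraph{Main obstacle.}
The principal difficulty lies in closing the a priori estimates of the second paragraph uniformly in $\eps$ when only $L^1$-information on $\ue$ is available at $t = 0$. The smoothing semigroup estimates introduce singular factors in $t$ that must be balanced carefully against the chemotactic cross term without destroying the Tao-type absorption; in particular, the specific threshold $\frac{2}{3n\chi}$ reflects the sharpness needed for this absorption to survive across the Moser iteration in arbitrary dimension $n \geq 2$. Successfully threading this needle -- and ensuring that the resulting constants blow up only in a controlled way as $\tau \searrow 0$ while remaining independent of $\eps$ -- is, I expect, where the bulk of the technical work lies.
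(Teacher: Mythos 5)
Your overall skeleton (regularize the data, solve globally for each $\eps$, prove $\eps$-uniform interior bounds, pass to the limit, recover the traces) matches the paper, but the two load-bearing steps are not supplied, and one of them is set up in a way that would not close under the stated threshold. First, the $\eps$-uniform smoothing estimate: the paper does not run a Duhamel/Moser iteration on a functional of the shape $\int_\Omega \ue^p + \int_\Omega |\grad \ve|^{2q}$; it uses Tao's exponentially weighted functional $\int_\Omega \ue^p\phi(\ve)$ with $\phi(s)=e^{(\beta s)^2}$, and crucially only for $p\le \frac n2+\delta$. The absorption condition there reads essentially $(2+\frac{1}{1-\delta})\chi p\|v_0\|_\L{\infty}\le 1-\delta$, i.e.\ it degrades as $p$ grows, and the constant $\frac{2}{3n\chi}$ is precisely what makes it work for $p$ slightly above $\frac n2$ -- not a sharpness condition for an absorption ``across the Moser iteration in arbitrary dimension''. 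Trying to push the Tao-type absorption to arbitrary $p\in(1,\infty)$ under $\|v_0\|_\L{\infty}<\frac{2}{3n\chi}$ would fail; the paper instead stops at $p\le\frac n2+\delta$, multiplies the resulting differential inequality by $t^\lambda$ and uses Gagliardo--Nirenberg against the conserved mass to obtain $t^\lambda\int_\Omega\ue^p\le C$ uniformly in $\eps$, and only then upgrades via a semigroup/Duhamel bootstrap ($W^{1,q}$-bound for $\ve$, then $L^\infty$ for $\ue$, then parabolic H\"older theory). Your sketch acknowledges that this is ``where the bulk of the technical work lies'' but does not give the mechanism that breaks the circularity between $\ue$ and $\grad\ve$ starting from only an $L^1$ bound at $t=0$.

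Second, the initial trace of $u$: your claim that testing with $\phi$ gives a bound on $\big|\frac{\d}{\d t}\int_\Omega\ue\phi\big|$ that is ``integrable in $t$ near $0$ thanks to the a priori estimates above'' is a genuine gap. Estimates that are uniform only on $[\tau,T]$, even with controlled blow-up as $\tau\searrow 0$ (indeed $\int_\Omega\ue^p\lesssim t^{-\lambda}$), do not control the taxis contribution $\chi\int_0^t\int_\Omega \ue|\grad\ve|\,|\grad\phi|$ near $t=0$. The paper needs the additional space-time dissipation information retained from the weighted functional, namely $\int_0^t s^\lambda\int_\Omega\ue^p|\grad\ve|^2\le C$, combined with a Young splitting of $\ue|\grad\ve|$ and mass conservation, to prove $\int_0^t\|\ue(\cdot,s)\grad\ve(\cdot,s)\|_\L{1}\d s\le Ct^\alpha$; only then does the fundamental-theorem-of-calculus argument plus a three-term triangle inequality give vague convergence $u(\cdot,t)\to u_0$. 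Without a quantitative bound of this kind your argument for (\ref{u_continuity}) does not close (the $v$-trace, by contrast, is unproblematic and can be handled either by your energy/compactness route or, more simply, by the Duhamel estimate the paper uses).
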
\noindent
\paragraph{Approach.} As is not uncommon, we will construct our desired solutions as the limit of a family of approximate solutions $((\ue, \ve))_{\eps \in (0,1)}$. Our approach will be guided by both the classical existence result for small data solutions to (\ref{problem}) with smooth initial data by Tao (cf.\ \cite{TaoBoundednessChemotaxisModel2011}) as well as a more recent development regarding the construction of solutions with measure-valued initial data to the repulsive-attractive Keller--Segel system in \cite{HeihoffDoesStrongRepulsion2022}.
\\[0.5em]
To construct the aforementioned approximate solutions, we first smoothly approximate our initial data and then apply the ready-made local existence theory from \cite{TaoBoundednessChemotaxisModel2011} to the otherwise unchanged system (\ref{problem}). Ruling out finite-time blowup for these local solutions will be a convenient by-product of our other arguments. Using an adapted version of what is arguably the key innovation of \cite{TaoBoundednessChemotaxisModel2011}, we then derive the following uniform differential inequality in \Cref{lemma:magic_functional}: 
\begin{equation}\label{eq:explaination}
	\frac{1}{p}\frac{\d}{\d t} \int_\Omega \ue^p \phi(\ve) + \frac{2\delta(p-1)}{p^2} \int_\Omega \phi(\ve) |\grad \ue^\frac{p}{2}|^2 + \frac{\delta}{p}\int_\Omega \ue^p \phi''(\ve) |\grad \ve|^2 \leq 0
\end{equation}
with some small $\delta \in (0,1)$, $\phi(s) \defs e^{(\beta s)^2}$, $s \geq 0$, and $\beta \defs \sqrt{\frac{\chi(p-1)}{4\|v_0\|_\L{\infty}}}$ for all $p \in ( 1, \frac{n}{2} + \delta ]$. Given its origin, it may not be surprising that it is this argument that necessitates the introduction of the smallness condition (\ref{v_0_condition}). Notably while in \cite{TaoBoundednessChemotaxisModel2011} any a priori information thereby obtained for the term $\int_\Omega \ue^p \phi''(\ve) |\grad \ve|^2$ was discarded, it will play a crucial role in our later derivations.  
\\[0.5em]
In view of the potential irregularity of our initial data, we naturally cannot expect the functional $\int_\Omega \ue^p \phi(\ve)$ to be uniformly bounded at $t = 0$. Fortunately, the second term in (\ref{eq:explaination}), which is of dissipative type, suggests that our functional enjoys immediate boundedness properties independent of the initial state as long as there is at least a uniform mass bound available similar to e.g.\ the immediate smoothing properties of solutions to the heat equation. Instead of using a standard comparison argument to show that this is in fact the case, we will adapt a technique from \cite{HeihoffDoesStrongRepulsion2022}, which will allow us to further extract some additional a priori information for the third term in (\ref{eq:explaination}). More specifically by multiplying our functional by $t^\lambda$ with a sufficiently large $\lambda > 0$ and analyzing the time evolution of the new functional obtained in this way, we gain the following uniform bounds on any time interval $(0,T)$ in \Cref{lemma:time_scaled_functional}:
\begin{equation}\label{eq:explaination_2}
	t^\lambda \int_\Omega \ue^p(x,t) \d x \leq C(T,p,\lambda)  	
	\stext{ and }
	\int_0^t s^\lambda \int_\Omega \ue(x,s)^p |\grad \ve(x,s)|^2 \d x \d s \leq C(T,p,\lambda)
\end{equation}
In \Cref{section:solution_construction}, we then use the first half of (\ref{eq:explaination_2}) as the basis of a straightforward bootstrap argument employing semigroup theory as well as standard parabolic regularity theory to gain sufficiently strong parabolic Hölder space bounds for our approximate solutions. Using standard compact embedding properties of Hölder spaces, this then allows us to construct functions $u, v \in C^{2,1}(\overline{\Omega}\times(0, \infty))$ as the limit of our approximate solutions by diagonal sequence argument, which immediately inherit their classical solution properties from said approximate solutions.
\\[0.5em]
In \Cref{section:initial_data_continuity}, it now remains to show that the functions $u$ and $v$ also satisfy the continuity properties (\ref{u_continuity}) and (\ref{v_continuity}), respectively. Our approach here is twofold. We first argue that our approximate solutions were already uniformly continuous in $t = 0$ in appropriate topologies and then only need to show that this uniform continuity property survives the limit process. While this is fairly straightforward for the family $(\ve)_{\eps \in (0,1)}$, treating the family $(\ue)_{\eps \in (0,1)}$ is a bit more involved due to the problematic taxis term in the first equation of (\ref{problem}). To overcome this problem and control the taxis term, we make decisive use of the second part of (\ref{eq:explaination_2}) to show in \Cref{lemma:continuity_taxis_bound} that
$
	\int_0^t \|\ue(\cdot, s) \grad \ve(\cdot, s)\|_\L{1} \d s	
$
becomes uniformly small as $t\searrow 0$. 

\section{Regularized initial data and approximate solutions}
\label{section:approx_solutions}
For the remainder of this paper, we fix a bounded domain $\Omega \subseteq \R^n$, $n\geq 2$, with a smooth boundary, the system parameter $\chi > 0$ as well as the initial data $u_0 \in \Mp$ and $v_0 \in L^\infty(\Omega)$ with $m \defs u_0(\overline{\Omega}) > 0$, $v_0 \geq 0$ and $v_0 \not\equiv 0$. Constants and parameters in the results and proofs of this paper will only implicitly depend on the above parameters. Any other dependencies will be made explicit.
\\[0.5em]
As already expanded upon in the introduction, we will construct our desired solutions as the limit of a family of approximate solutions obtained by using the already established local existence theory for (\ref{problem}) from \cite{TaoBoundednessChemotaxisModel2011}. To this end, we begin by fixing a family $(u_{0,\eps})_{\eps \in (0,1)} \in C^\infty(\overline{\Omega})$ such that all $u_{0, \eps}$ are positive, that  
\begin{equation}\label{u0_props}
    u_{0, \eps} \rightarrow  u_0 \;\;\;\; \text{ in } \Mp \text{ as } \eps \searrow 0 \stext{ and that }  \int_\Omega u_{0, \eps} = u_0(\overline{\Omega}) \sfed m \text{ for all } \eps \in (0,1).
\end{equation}
For more details on how such an approximation can be achieved, we refer the reader to e.g.\ \cite[Remark 2.2]{HeihoffExistenceGlobalSmooth2021}. We further set $v_{0, \eps} \defs e^{\eps \laplace}v_0 \in C^\infty(\overline{\Omega})$ for each $\eps \in (0,1)$, where $(e^{t\laplace})_{t \geq 0}$ is the Neumann heat semigroup. Due to the maximum principle, this immediately guarantees positivity of all $v_{0, \eps}$ and further that 
\begin{equation}
    \|v_{0, \eps}\|_\L{\infty} \leq \|v_0\|_\L{\infty}
\end{equation}
for all $\eps \in (0,1)$. Moreover, the continuity properties of the heat semigroup ensure that 
\begin{equation}\label{v0_continuity}
	v_{0, \eps}	\rightarrow v_0 \;\;\;\; \text{ in }L^p(\Omega) \text{ for all } p\in (1,\infty)
\end{equation}
as $\eps \searrow 0$.
\\[0.5em]
As already mentioned, this now allows us to construct local solutions to (\ref{problem}) with initial data $(u_{0,\eps}, v_{0,\eps})$ using \cite[Lemma 2.1]{TaoBoundednessChemotaxisModel2011}.
\begin{lemma}\label{lemma:approx_exist}
	For each $\eps \in (0,1)$, there exists a maximal existence time $\tmaxeps \in (0,\infty]$ and a nonnegative classical solution 
	\[
		(\ue, \ve) \in \left( C^0(\overline{\Omega}\times[0,\tmaxeps)) \cap C^{2,1}(\overline{\Omega}\times (0,\tmaxeps)) \right)^2
	\] to (\ref{problem}) with initial data $(u_{0,\eps}, v_{0,\eps})$ and the following blow-up criterion:
	\begin{equation}\label{eq:blowup_criterion}
		\text{If }	\tmaxeps < \infty, \text{ then } \limsup_{t\nearrow \tmaxeps} \|\ue(\cdot, t)\|_\L{\infty}  = \infty.
	\end{equation}
	Further,
	\begin{equation}\label{eq:mass_conservation}
		\int_\Omega \ue(\cdot, t) = \int_\Omega u_{0,\eps} = u_0(\overline{\Omega}) = m \stext{ and } \|\ve(\cdot, t)\|_\L{\infty} \leq \|v_{0, \eps}\|_\L{\infty} \leq \|v_0\|_\L{\infty}
	\end{equation}
	for all $t \in (0,\tmaxeps)$.
\end{lemma}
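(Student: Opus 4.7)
The plan is to invoke the local existence machinery from \cite[Lemma 2.1]{TaoBoundednessChemotaxisModel2011} essentially as a black box, since our approximate initial data $u_{0,\eps}, v_{0,\eps} \in C^\infty(\overline{\Omega})$ are smooth, positive, and satisfy the compatibility conditions required by standard parabolic fixed-point arguments. This immediately yields the maximal existence time $\tmaxeps \in (0,\infty]$, the regularity asserted in the statement, and the blow-up criterion (\ref{eq:blowup_criterion}) (which in Tao's formulation is a standard alternative-type extendability criterion for the $L^\infty$ norm of $\ue$, since once this norm stays bounded on a time interval, parabolic Schauder theory bootstraps all other norms needed to extend the solution).

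The nonnegativity of $\ue$ and $\ve$ follows from a parabolic maximum principle applied separately to each equation: the $\ve$-equation reads $\vet = \laplace \ve - \ue \ve$ with Neumann boundary data, so treating $-\ue$ as a bounded coefficient on any $[0,T] \subset [0,\tmaxeps)$ yields $\ve \geq 0$; for $\ue$ one rewrites the first equation in the divergence form $\uet = \div(\grad \ue - \chi \ue \grad \ve)$ and invokes the standard weak maximum principle for such drift-diffusion operators (with $\grad \ve$ smooth on each time slice), starting from the positive datum $u_{0,\eps}$.

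For the mass conservation in (\ref{eq:mass_conservation}), I would simply integrate the first equation over $\Omega$, noting that both $\grad \ue \cdot \nu$ and $\ue \grad \ve \cdot \nu$ vanish on $\partial \Omega$ by the Neumann conditions, so $\frac{\d}{\d t} \int_\Omega \ue = 0$; combining with (\ref{u0_props}) gives the asserted identity. For the $L^\infty$ bound on $\ve$, the key observation is that $-\ue \ve \leq 0$ since both factors are nonnegative, so the second equation of (\ref{problem}) yields the pointwise differential inequality $\vet \leq \laplace \ve$ with homogeneous Neumann boundary data. A standard comparison with the spatial constant $\|v_{0,\eps}\|_\L{\infty}$ (which solves the heat equation trivially and dominates the initial data) then produces $\|\ve(\cdot,t)\|_\L{\infty} \leq \|v_{0,\eps}\|_\L{\infty}$, and the final inequality $\|v_{0,\eps}\|_\L{\infty} \leq \|v_0\|_\L{\infty}$ is the maximum-principle property of the Neumann heat semigroup already recorded in the construction of $v_{0,\eps}$.

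There is no genuine obstacle here: every assertion is either a direct citation or a one-line application of the maximum principle / divergence theorem. The only minor care needed is to confirm that the regularity output from \cite{TaoBoundednessChemotaxisModel2011} is indeed $C^0(\overline{\Omega}\times[0,\tmaxeps)) \cap C^{2,1}(\overline{\Omega}\times(0,\tmaxeps))$ rather than a more restrictive class; this follows from parabolic bootstrap since $u_{0,\eps}, v_{0,\eps} \in C^\infty(\overline{\Omega})$ satisfy the homogeneous Neumann compatibility condition trivially (their gradients are smooth and can be extended while keeping $\grad u_{0,\eps}\cdot\nu = \grad v_{0,\eps}\cdot\nu=0$ on $\partial\Omega$ if one additionally chooses the approximations in Section~2 to respect this, or alternatively by a short interior/boundary Schauder argument on small time intervals).
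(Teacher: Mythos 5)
Your proposal is correct and follows essentially the same route as the paper, which simply cites \cite[Lemma 2.1]{TaoBoundednessChemotaxisModel2011}; the extra verifications you supply (nonnegativity, mass conservation via the divergence theorem, the comparison argument for $\|\ve\|_\L{\infty}$) are standard facts already contained in, or immediate from, that cited lemma. Your closing worry about Neumann compatibility of the approximated initial data is unnecessary, since the solution class $C^0(\overline{\Omega}\times[0,\tmaxeps))\cap C^{2,1}(\overline{\Omega}\times(0,\tmaxeps))$ only requires continuity up to $t=0$, and Tao's lemma is stated for exactly this class without any compatibility condition.
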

\begin{proof}
	This lemma is a direct consequence of \cite[Lemma 2.1]{TaoBoundednessChemotaxisModel2011}.
\end{proof}\noindent
From this point onward we fix the approximate solutions $(\ue, \ve)$ established in the previous lemma as well as their maximal existence time $\tmaxeps$ for all $\eps \in (0,1)$.

\section{A priori estimates up to $t = 0$}
In this section, we will prove some key uniform a priori estimates for the approximate solutions up to $t=0$. As the arguments to achieve such estimates will in part be built on the techniques developed in \cite{TaoBoundednessChemotaxisModel2011} to construct classical solutions to (\ref{problem}) with smooth initial data, let us briefly reiterate the core ideas of said reference to give some context. The key innovation of \cite{TaoBoundednessChemotaxisModel2011} is to investigate the time evolution of the functional $\int_\Omega \ue^p \phi(\ve)$ with $\phi(s) \defs e^{(\beta s)^2}$, $s \geq 0$, $\beta > 0$, and $p = n+1$ by looking at its time derivative. By choosing $\beta$ in such a way that all terms in the resulting differential equation, which could potentially suggest growth of the functional, can be absorbed by a term of the form $\int_\Omega u^p \phi''(\ve) |\grad \ve|^2$ with negative sign due to some favorable pointwise estimates, it can be shown that the functional stays globally bounded. The resulting $L^{n+1}(\Omega)$ a priori information for $\ue$ then proves sufficient to rule out finite-time blowup by standard methods. Notably, the mentioned pointwise estimates only work under a smallness condition on the $L^\infty(\Omega)$ norm of the initial data $v_0$. 
\\[0.5em]
In our case, we evidently cannot expect the functional to be uniformly bounded even at $t = 0$. Nonetheless, a differential inequality for $\int_\Omega \ue^p \phi(\ve)$ of the same type as the one used in \cite{TaoBoundednessChemotaxisModel2011} will still prove invaluable for our approach. 
In contrast to the mentioned reference, we will only need said functional inequality for significantly smaller values of $p$ than $n+1$ and naturally want to make sure that the derivation is independent of $\eps$, which was not a concern in the original paper. By changing the choice of the parameter $\beta$, we will further improve the initial data condition for the second solution component compared to \cite{TaoBoundednessChemotaxisModel2011} as well. 
\begin{lemma}\label{lemma:magic_functional}
	If $v_0$ satisfies (\ref{v_0_condition}), then there exists $\delta \in (0,1)$ such that the inequality
	\begin{equation}\label{eq:energy}
		\frac{1}{p}\frac{\d}{\d t} \int_\Omega \ue^p \phi(\ve) + \frac{2\delta(p-1)}{p^2} \int_\Omega \phi(\ve) |\grad \ue^\frac{p}{2}|^2 + \frac{\delta}{p}\int_\Omega \ue^p \phi''(\ve) |\grad \ve|^2 \leq 0
	\end{equation}
	holds for all $p \in (1, \frac{n}{2} + \delta]$, $t \in (0,\tmaxeps)$ and $\eps \in (0,1)$, where
	\begin{equation}\label{eq:phi_definition}
		\phi(s) \defs e^{(\beta s)^2} \;\;\;\; \text{ for all } s \geq 0 \stext{ with } \beta \equiv \beta(p) \defs \sqrt{\frac{\chi(p-1)}{4\|v_0\|_\L{\infty}}} .
	\end{equation}
\end{lemma}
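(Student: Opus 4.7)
My plan is to differentiate $\int_\Omega \ue^p \phi(\ve)$ in time, substitute the PDE system and integrate by parts twice (the boundary terms vanishing by the Neumann conditions), to reduce \eqref{eq:energy} to a pointwise algebraic inequality that the tailored choice of $\beta$ makes satisfiable under \eqref{v_0_condition}. After dropping the non-positive consumption contribution $-\int_\Omega \ue^{p+1}\ve\phi'(\ve)$, combining the two cross terms coming from the diffusion in $\ve$ and the taxis in $\ue$, and rewriting $\ue^{p-2}|\grad \ue|^2 = \tfrac{4}{p^2}|\grad \ue^{p/2}|^2$ as well as $\ue^{p-1}\grad \ue = \tfrac{2}{p}\ue^{p/2}\grad \ue^{p/2}$, I expect to land at
\begin{equation*}
    \tfrac{1}{p}\tfrac{\d}{\d t}\int_\Omega \ue^p \phi(\ve) \leq -\tfrac{4(p-1)}{p^2}\int_\Omega \phi(\ve)|\grad \ue^{p/2}|^2 + \tfrac{2}{p}\int_\Omega \ue^{p/2}[\chi(p-1)\phi(\ve) - 2\phi'(\ve)]\grad \ue^{p/2}\cdot \grad \ve + \int_\Omega \ue^p\left[\chi\phi'(\ve) - \tfrac{1}{p}\phi''(\ve)\right]|\grad \ve|^2,
\end{equation*}
so that \eqref{eq:energy} follows as soon as the corresponding pointwise quadratic form in the vectors $\grad \ue^{p/2}$ and $\ue^{p/2}\grad \ve$, with coefficients $a(\ve) = \tfrac{4(p-1)(1-\delta/2)\phi(\ve)}{p^2}$, $b(\ve) = \tfrac{1}{p}[\chi(p-1)\phi(\ve) - 2\phi'(\ve)]$, and $c(\ve) = \tfrac{1-\delta}{p}\phi''(\ve) - \chi\phi'(\ve)$, is shown to be everywhere nonnegative.

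Since Cauchy--Schwarz (writing $|X\cdot Y| \leq |X||Y|$) shows that the scalar discriminant bound $b^2 \leq ac$ suffices for $a|X|^2 - 2b\,X\cdot Y + c|Y|^2 \geq 0$ on vector pairs $(X,Y)$, it is enough to verify
\begin{equation*}
    [\chi(p-1)\phi(s) - 2\phi'(s)]^2 \leq 4(p-1)(1-\delta/2)\phi(s)\left[\tfrac{1-\delta}{p}\phi''(s) - \chi\phi'(s)\right]
\end{equation*}
for every $s \in [0, \|v_0\|_\L{\infty}]$. Inserting $\phi'(s) = 2\beta^2 s \phi(s)$ and $\phi''(s) = 2\beta^2(1 + 2\beta^2 s^2)\phi(s)$, using that \eqref{eq:phi_definition} is chosen precisely to arrange $\chi(p-1) = 4\beta^2 S$ with $S \defs \|v_0\|_\L{\infty}$, and dividing by $\phi(s)^2$, this condenses to
\begin{equation*}
    \chi(S-s)^2 \leq 2S(1-\delta/2)\left[\tfrac{1-\delta}{p}\left(1 + \tfrac{\chi(p-1)s^2}{2S}\right) - \chi s\right]
\end{equation*}
for all $s \in [0,S]$ and $p \in (1, \tfrac{n}{2}+\delta]$; note that the vanishing of the left-hand side at $s = S$ is exactly the motivation behind the choice of $\beta$.

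Setting $\delta = 0$ the inequality collapses to $\chi(pS^2 + s^2) \leq 2S$, whose worst case $s = S$ yields the single scalar condition $\chi S \leq \tfrac{2}{p+1}$. Since $\tfrac{2}{3n} < \tfrac{2}{n/2 + 1}$ for every $n \geq 2$ (equivalent to $5n > 2$), the smallness hypothesis \eqref{v_0_condition} provides this with strict slack even at the upper extreme $p = \tfrac{n}{2}$, and by continuity of the reduced inequality in all of its arguments the slack survives for some $\delta \in (0,1)$ small enough to simultaneously enlarge the $p$-range to $(1, \tfrac{n}{2}+\delta]$ and install the $(1-\delta/2)$ and $(1-\delta)$ buffers in the two dissipation coefficients.

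The most delicate point is the uniform handling of the family of quadratic discriminants across all admissible $(s,p)$: since $\phi, \phi', \phi''$ are all $s$-dependent, a naive Young's inequality on the cross term would not retain the dissipative structure required by \eqref{eq:energy}. The cancellation $\chi(p-1) = 4\beta^2 S$ engineered into $\beta$ is what reduces matters to the single concrete scalar inequality above, and the smallness hypothesis \eqref{v_0_condition} provides enough headroom to accommodate the $\delta$-margin both in the $p$-range and in the dissipation coefficients.
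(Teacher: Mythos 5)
Your derivation is correct and arrives at the same differential inequality, but the absorption step takes a genuinely different route from the paper's. Up to the quadratic-form stage the computations coincide (time differentiation of $\int_\Omega \ue^p\phi(\ve)$, substitution of the PDEs, integration by parts, discarding $-\frac1p\int_\Omega \ue^{p+1}\phi'(\ve)\ve\le 0$); from there the paper applies Young's inequality separately to the two cross terms and then bounds the three resulting ratios $\psi_i(s)/(\frac1p\phi''(s))$ pointwise on $[0,\|v_0\|_\L{\infty}]$, fixing $\delta$ explicitly through (\ref{eq:delta_definition}), whereas you keep the integrand as a quadratic form in $(\grad\ue^{p/2},\,\ue^{p/2}\grad\ve)$ and check nonnegativity via the discriminant condition $b^2\le ac$. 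Your route exploits the exact cancellation $\chi(p-1)\phi-2\phi'=4\beta^2(\|v_0\|_\L{\infty}-s)\phi$ engineered into $\beta$, and your reduction to $\chi\big(p\|v_0\|_\L{\infty}^2+s^2\big)\le 2\|v_0\|_\L{\infty}$ at $\delta=0$ is algebraically correct; its worst case $\chi\|v_0\|_\L{\infty}\le\frac{2}{p+1}$ is strictly less restrictive than (\ref{v_0_condition}) for $p\le\frac n2$, so your argument even shows that the constant $\frac{2}{3n\chi}$ is not sharp for this lemma, while the paper's cruder Young splitting reproduces exactly that constant in the limit $\delta\to 0$. What the paper's approach buys is an explicit admissible $\delta$; what yours buys is a sharper, cleaner scalar condition at the price of selecting $\delta$ by continuity. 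Two details you should make explicit in a final write-up: restricting the pointwise verification to $s\in[0,\|v_0\|_\L{\infty}]$ relies on $0\le\ve\le\|v_0\|_\L{\infty}$ from \Cref{lemma:approx_exist} (your "everywhere nonnegative" is only needed on the range of $\ve$), and the concluding perturbation step should be run on a fixed compact set, say $(s,p)\in[0,\|v_0\|_\L{\infty}]\times[1,\frac n2+1]$, where the $\delta=0$ slack $2\|v_0\|_\L{\infty}-\chi(p\|v_0\|_\L{\infty}^2+s^2)$ has a positive lower bound under (\ref{v_0_condition}), so that one small $\delta$ works uniformly for all $p\in(1,\frac n2+\delta]$ and all $s$ simultaneously.
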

\begin{proof}
	According to our assumption that $v_0$ satisfies (\ref{v_0_condition}), we can fix $\delta \in (0,1)$ such that
	\begin{equation}\label{eq:delta_definition}
		\|v_0\|_\L{\infty} \leq \frac{2}{(2+ \frac{1}{1-\delta})(n+2\delta)\chi}\left( 1 - \delta \right).
	\end{equation}
	Applying the differential equations from (\ref{problem}) as well as partial integration to the terms resulting from time differentiation of the functional $\int_\Omega \ue^p \phi(\ve)$, we then gain that
	\begin{align*}
		\frac{1}{p}\frac{\d}{\d t} \int_\Omega \ue^p \phi(\ve) &= \int_\Omega \ue^{p-1} \phi(\ve) \uet + \frac{1}{p}\int_\Omega \ue^p \phi'(\ve) \vet \\
		&= \int_\Omega \ue^{p-1} \phi(\ve) \laplace \ue - \chi \int_\Omega \ue^{p-1} \phi(\ve) \div (\ue \grad \ve)\\
		&\hphantom{=}+\frac{1}{p} \int_\Omega \ue^p \phi'(\ve) \laplace \ve - \frac{1}{p} \int_\Omega \ue^{p+1} \phi'(\ve)\ve \\
		&= - (p-1)\int_\Omega \ue^{p-2} \phi(\ve) |\grad \ue|^2 - \int_\Omega \ue^{p-1} \phi'(\ve) \grad \ve \cdot \grad \ue \\
		&\hphantom{=}+ \chi(p-1) \int_\Omega \ue^{p-1} \phi(\ve) \grad \ve \cdot \grad \ue + \chi \int_\Omega \ue^{p} \phi'(\ve)|\grad \ve|^2 \\
		&\hphantom{=}- \int_\Omega \ue^{p-1}\phi'(\ve) \grad \ue \cdot \grad \ve - \frac{1}{p} \int_\Omega \ue^{p} \phi''(\ve) |\grad \ve|^2 - \frac{1}{p} \int_\Omega \ue^{p+1} \phi'(\ve)\ve 
	\end{align*}
	for all $t\in(0,\tmaxeps)$ and $\eps \in (0,1)$. Given that the nonnegativity of $\phi'(s) = 2\beta^2 s \phi(s)$ for all $s\geq 0$ implies that 
	\[
		-\frac{1}{p} \int_\Omega \ue^{p+1} \phi'(\ve)\ve  \leq 0,
	\]
	it further follows that 
	\begin{align*}
		&\frac{1}{p}\frac{\d}{\d t} \int_\Omega \ue^p \phi(\ve) + (p-1)\int_\Omega \ue^{p-2} \phi(\ve) |\grad \ue|^2 + \frac{1}{p} \int_\Omega \ue^{p} \phi''(\ve) |\grad \ve|^2 \\
		&\leq -2 \int_\Omega \ue^{p-1}\phi'(\ve) \grad \ue \cdot \grad \ve + \chi(p-1) \int_\Omega \ue^{p-1} \phi(\ve) \grad \ve \cdot \grad \ue + \chi \int_\Omega \ue^{p} \phi'(\ve)|\grad \ve|^2
	\end{align*}
	for all $t\in(0,\tmaxeps)$ and $\eps\in(0,1)$ after some slight rearrangement. Applying Young's inequality to the first and second term on the right-hand side of the above inequality, we see that
	\[
		-2 \int_\Omega \ue^{p-1}\phi'(\ve) \grad \ue \cdot \grad \ve \leq \frac{(p-1)(1-\delta)}{2}\int_\Omega \ue^{p-2}\phi(\ve) |\grad \ue|^2 + \frac{2}{(p-1)(1-\delta)} \int_\Omega \ue^p \frac{{\phi'}^2(\ve)}{\phi(\ve)} |\grad \ve|^2
	\]
	as well as
	\[
		\chi(p-1) \int_\Omega \ue^{p-1} \phi(\ve) \grad \ve \cdot \grad \ue \leq \frac{p-1}{2}\int_\Omega \ue^{p-2}\phi(\ve) |\grad \ue|^2 + \frac{\chi^2(p-1)}{2}\int_\Omega \ue^p \phi(\ve) |\grad \ve|^2
	\]
	and thus further that
	\begin{align*}
		&\frac{1}{p}\frac{\d}{\d t} \int_\Omega \ue^p \phi(\ve) + \frac{\delta(p-1)}{2}\int_\Omega \ue^{p-2} \phi(\ve) |\grad \ue|^2 + \frac{1}{p} \int_\Omega \ue^{p} \phi''(\ve) |\grad \ve|^2 \\
		&\leq \frac{2}{(p-1)(1-\delta)} \int_\Omega \ue^p \frac{{\phi'}^2(\ve)}{\phi(\ve)} |\grad \ve|^2 + \frac{\chi^2(p-1)}{2}\int_\Omega \ue^p \phi(\ve) |\grad \ve|^2 + \chi \int_\Omega \ue^{p} \phi'(\ve)|\grad \ve|^2\\
		&= \int_\Omega \Big[\, \psi_1(\ve) + \psi_2(\ve) + \psi_3(\ve) \, \Big] \ue^p |\grad \ve|^2,
		\numberthis \label{eq:func_inequal}
	\end{align*}
	where
	\[
		\psi_1 \defs \frac{2{{\phi'}^2}}{(p-1)(1-\delta)\phi}, \;\;\;\; \psi_2 \defs \frac{\chi^2(p-1)}{2} \phi \stext{ and } \psi_3 \defs \chi \phi'
	\] 
	for all $t\in(0,\tmaxeps)$ and $\eps \in (0,1)$. 
	\\[0.5em]
	We will now establish suitable pointwise estimates for $\psi_1$, $\psi_2$ and $\psi_3$ by $\phi''$ on $[0,\|v_0\|_\L{\infty}]$ so that the term on the right-hand side of (\ref{eq:func_inequal}) can be absorbed by the corresponding term on the left-hand side of the same inequality. More specifically using that 
	\begin{equation}\label{eq:phi_props}
		\phi'(s) = 2\beta^2 s \phi(s) \stext{ and } \phi''(s) = 2\beta^2 \phi(s) + 2\beta^2 s \phi'(s) = 2\beta^2 (1 + 2\beta^2 s^2) \phi(s) 
	\end{equation}
	for all $s \geq 0$, we compute that
	\begin{align*}
		\frac{\psi_1(s)}{\frac{1}{p} \phi''(s)} &= \frac{2p}{(p-1)(1-\delta)} \frac{{\phi'(s)}^2}{\phi(s) \phi''(s)} =  \frac{2p}{(p-1)(1-\delta)}\frac{(2\beta^2 s)^2}{2\beta^2(1+2\beta^2 s^2)} \\
		&\leq \frac{2p}{(p-1)(1-\delta)}\frac{(2\beta^2 s)^2}{2\beta^2} =\frac{4p}{(p-1)(1-\delta)} \beta^2 s^2 
		\leq \frac{4p}{(p-1)(1-\delta)} \beta^2 \|v_0\|_\L{\infty}^2 = \frac{\chi p}{1-\delta} \|v_0\|_\L{\infty}
	\end{align*}
	due to our choice of $\beta$ in (\ref{eq:phi_definition}) and 
	\[
		\frac{\psi_2(s)}{\frac{1}{p} \phi''(s)} = \frac{\chi^2(p-1)p}{2} \frac{\phi(s)}{\phi''(s)} = \frac{\chi^2(p-1)p}{4\beta^2 (1 + 2\beta^2 s^2)} \leq \frac{\chi^2(p-1)p}{4\beta^2} = \chi p \|v_0\|_\L{\infty}
	\]
	again due to our choice of $\beta$ in (\ref{eq:phi_definition}) as well as
	\[
		\frac{\psi_3(s)}{\frac{1}{p} \phi''(s)} = p \chi \frac{\phi'(s)}{\phi''(s)} = \chi p \frac{2\beta^2 s}{2\beta^2 (1 + 2\beta^2 s^2)} \leq \chi p s	\leq \chi p \|v_0\|_\L{\infty} 
	\]
	for all $s\in[0,\|v_0\|_\L{\infty}]$. Combining the above three estimates with the fact that $p \leq \frac{n}{2}+\delta$ as well as (\ref{eq:delta_definition}), we gain that 
	\begin{equation}\label{eq:pointwise_estimate}
		\frac{\psi_1(s) + \psi_2(s) + \psi_3(s)}{\frac{1}{p} \phi''(s)} \leq (2+\tfrac{1}{1-\delta})\chi p \|v_0\|_\L{\infty} \leq\frac{(2+\frac{1}{1-\delta})(n + 2\delta)\chi }{2} \|v_0\|_\L{\infty} \leq (1-\delta)
	\end{equation}
	for all $s\in[0,\|v_0\|_\L{\infty}]$.
	Applying (\ref{eq:pointwise_estimate}) to the functional inequality (\ref{eq:func_inequal}) then yields
	\[
		\frac{1}{p}\frac{\d}{\d t} \int_\Omega \ue^p \phi(\ve) + \frac{\delta(p-1)}{2}\int_\Omega \ue^{p-2} \phi(\ve) |\grad \ue|^2 + \frac{\delta}{p}\int_\Omega \ue^{p} \phi''(\ve) |\grad \ve|^2 \leq 0 
	\]
	for all $t\in(0,\tmaxeps)$ and $\eps\in(0,1)$ as $\|\ve(\cdot, t)\|_\L{\infty} \leq \|v_0\|_\L{\infty}$ for all $t\in (0,\tmaxeps)$ according to \Cref{lemma:approx_exist}. This of course directly implies our desired result as $\ue^{p-2}|\grad \ue|  = \frac{4}{p^2}|\grad \ue^\frac{p}{2}|^2$.
\end{proof}\noindent
Having now derived the above differential inequality, we can see that it not only provides a favorable monotonicity property for the functional under consideration but also yields a dissipative term of the form $\int_\Omega \phi(\ve)|\grad \ue^\frac{p}{2}|^2$, which is similar to terms often used to derive the immediate smoothing properties of e.g.\ the heat equation as $\phi(\ve)$ is uniformly bounded from below and above. In our case, this dissipative character of the above inequality will in fact lead to a similar immediate uniform boundedness property for the functional independent of its initial value (and only relying on a uniform mass bound). While a standard approach to make use of the dissipative term to derive our desired boundedness property would be to show that the functional is a subsolution to an ordinary differential equation with superlinear decay (cf.\ e.g.\ \cite[Section 3 and 4]{HeihoffExistenceGlobalSmooth2021}), we will take a slightly different approach adapted from \cite[Lemma 3.3]{HeihoffDoesStrongRepulsion2022} to also gain some time-weighted a priori information about the third term in (\ref{eq:energy}). This information will later be crucial to prove that our construction yields solutions that are still connected to the initial data in a reasonable fashion. 
\begin{lemma}\label{lemma:time_scaled_functional}
	If $v_0$ satisfies (\ref{v_0_condition}), then there exists $\delta > 0$ such that the following holds: 
	\\[0.5em]
	For each $T \in (0,\infty)$, $p \in (1,\frac{n}{2}+\delta]$ and $\lambda > \frac{n}{2}(p-1)$, there exists $C \equiv C(T, p, \lambda) > 0$ such that
	\begin{equation}\label{eq:ue_a_priori}
		t^\lambda \int_\Omega \ue^p(x,t) \d x \leq C  	
	\end{equation}
	and 
	\begin{equation}\label{eq:grad_ue_a_priori}
		\int_0^t s^\lambda \int_\Omega \ue^p(x,s) |\grad \ve(x,s)|^2 \d x \d s \leq C	 
	\end{equation}
	for all $t\in (0,T)\cap(0,\tmaxeps)$ and $\eps \in (0,1)$.
\end{lemma}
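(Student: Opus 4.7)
The plan is to keep the $\delta \in (0,1)$ supplied by \Cref{lemma:magic_functional}, work with the functional $Y_\eps(t) \defs \int_\Omega \ue^p(\cdot, t)\,\phi(\ve(\cdot, t)) \d x$, and multiply the differential inequality (\ref{eq:energy}) by the time weight $t^\lambda$. After the product rule, this yields
\[
	\tfrac{\d}{\d t}\bigl( t^\lambda Y_\eps(t) \bigr) + \tfrac{2\delta(p-1)}{p}\, t^\lambda \int_\Omega \phi(\ve)|\grad \ue^{p/2}|^2 + \delta\, t^\lambda \int_\Omega \ue^p \phi''(\ve) |\grad \ve|^2 \leq \lambda t^{\lambda-1} Y_\eps(t),
\]
so the whole proof reduces to absorbing the growth contribution $\lambda t^{\lambda-1} Y_\eps$ into the first dissipation term on the left, up to remainders in $t$ that remain integrable on $(0, T)$.

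To do so, I will first estimate $Y_\eps \leq e^{\beta^2 \|v_0\|_\L{\infty}^2} \int_\Omega \ue^p$ and then invoke the Gagliardo--Nirenberg inequality for $w \defs \ue^{p/2}$. The mass identity $\|w\|_{L^{2/p}(\Omega)}^{2/p} = \|\ue\|_\L{1} = m$ from (\ref{eq:mass_conservation}) makes the interpolation base a pure constant in $\eps$ and $t$ and gives
\[
	\int_\Omega \ue^p \leq C_1 \|\grad \ue^{p/2}\|_\L{2}^{2\theta} + C_2 \quad \text{with} \quad \theta \defs \tfrac{n(p-1)}{n(p-1) + 2} \in (0,1).
\]
Applying Young's inequality in the form $ab \leq \eta\, a^{1/\theta} + C(\eta)\, b^{1/(1-\theta)}$ to the splitting $t^{\lambda-1} \|\grad \ue^{p/2}\|_\L{2}^{2\theta} = \bigl( t^{\lambda\theta} \|\grad \ue^{p/2}\|_\L{2}^{2\theta} \bigr)\cdot t^{\lambda(1-\theta)-1}$, and using $\phi \geq 1$ to absorb the $\eta$-piece back into the dissipation, bounds $\lambda t^{\lambda-1} Y_\eps$ by $\tfrac{\delta(p-1)}{p}\, t^\lambda \int_\Omega \phi(\ve)|\grad \ue^{p/2}|^2 + C_3\, t^{\lambda - \frac{1}{1-\theta}} + C_4\, t^{\lambda-1}$. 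A short computation reveals $\frac{1}{1-\theta} - 1 = \frac{n(p-1)}{2}$, so the hypothesis $\lambda > \tfrac{n}{2}(p-1)$ is precisely what keeps both remainder exponents strictly above $-1$ and renders the right-hand side integrable on $(0, T)$.

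With this differential inequality in hand, I will integrate from $0$ to $t \in (0, T) \cap (0, \tmaxeps)$. Since $u_{0, \eps}$ and $v_{0, \eps}$ are smooth, $Y_\eps$ stays bounded near $0$ for each fixed $\eps$, and $\lambda > 0$ yields $s^\lambda Y_\eps(s) \to 0$ as $s \searrow 0$, so integration produces $t^\lambda Y_\eps(t) \leq C(T, p, \lambda)$, from which (\ref{eq:ue_a_priori}) follows because $\phi \geq 1$, together with a uniform bound on $\int_0^t s^\lambda \int_\Omega \ue^p \phi''(\ve) |\grad \ve|^2 \d x \d s$; the pointwise lower bound $\phi''(s) \geq 2\beta^2 > 0$ from (\ref{eq:phi_props}) then converts this into (\ref{eq:grad_ue_a_priori}). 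The main obstacle I foresee is pinning down the precise factorisation of the $t$-weights that lets Young's inequality simultaneously absorb into the dissipation and deliver the sharp threshold $\lambda > \tfrac{n}{2}(p-1)$; any cruder splitting either fails the absorption or leaves a non-integrable singularity as $t \searrow 0$.
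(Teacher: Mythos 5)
Your proposal is correct and follows essentially the same route as the paper's proof: multiplying the inequality from \Cref{lemma:magic_functional} by $t^\lambda$, estimating $\int_\Omega \ue^p\phi(\ve)$ via Gagliardo--Nirenberg together with the mass bound (\ref{eq:mass_conservation}), absorbing the resulting term into the gradient dissipation by Young's inequality and $\phi \geq 1$, and finally using $\phi'' \geq 2\beta^2$ to obtain (\ref{eq:grad_ue_a_priori}), with the threshold $\lambda > \frac{n}{2}(p-1)$ emerging from exactly the same exponent computation. The only detail worth adding is that since $2/p < 1$ when $p > 2$, one should invoke the Gagliardo--Nirenberg version with extended parameter range, as the paper does via \cite[Lemma 2.3]{LiBoundednessChemotaxisHaptotaxis2016}.
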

\begin{proof}
	We begin by fixing $\delta > 0$ as in \Cref{lemma:magic_functional}, $p\in (1,\frac{n}{2} + \delta]$ as well as $\lambda > \frac{n}{2}(p-1)$.
	\\[0.5em]
	Multiplying the functional $\int_\Omega \ue^p \phi(\ve)$ by $t^\lambda$, with $\phi$ as in \Cref{lemma:magic_functional}, and differentiating regarding the time variable yields 
	\begin{align*}
		&\frac{1}{p}\frac{\d}{\d t}\left[ t^\lambda\int_\Omega \ue^p \phi(\ve)\right] + \frac{2\delta(p-1)}{p^2} t^\lambda\int_\Omega \phi(\ve) |\grad \ue^\frac{p}{2}|^2 + \frac{\delta}{p}t^\lambda\int_\Omega \ue^p \phi''(\ve) |\grad \ve|^2 \leq \frac{\lambda}{p} t^{\lambda - 1} \int_\Omega \ue^p \phi(\ve) \numberthis \label{eq:time_scaled_functional_ineq}
	\end{align*}
	for all $t\in(0,\tmaxeps)$ and $\eps \in (0,1)$ due to (\ref{eq:energy}). Noting that 
	\[
		\phi(\ve(\cdot, t)) = e^{(\beta \ve(\cdot, t))^2} \leq e^{(\beta \|v_0\|_\L{\infty})^2} \sfed K_1(p) = K_1	
	\]
	for all $t\in(0,\tmaxeps)$ and $\eps \in (0,1)$ with $\beta = \beta(p)$ as in \Cref{lemma:magic_functional}, we can apply a version of the Gagliardo--Nirenberg inequality allowing for a conveniently expanded parameter range (cf.\ \cite[Lemma 2.3]{LiBoundednessChemotaxisHaptotaxis2016}) to the problem term on the right-hand side of the above inequality to gain $K_2 = K_2(p) > 0$ such that 
	\begin{align*}
		\int_\Omega \ue^p \phi(\ve) &\leq K_1 \|\ue^\frac{p}{2}\|^2_\L{2} \leq K_1K_2 \|\grad \ue^\frac{p}{2}\|^\frac{2(p-1)}{p - 1+\frac{2}{n}}_\L{2} \|\ue^\frac{p}{2}\|^\frac{\frac{4}{n}}{p - 1+\frac{2}{n}}_\L{\frac{2}{p}}  + K_1 K_2 \|\ue^\frac{p}{2}\|^2_\L{\frac{2}{p}} \\
		&\leq K_3 \left( \int_\Omega |\grad \ue^\frac{p}{2}|^2 \right)^\frac{p-1}{p-1+\frac{2}{n}} + K_3
	\end{align*}
	for all $t\in(0,\tmaxeps)$ and $\eps \in (0,1)$ with $K_3 = K_3(p) \defs K_1 K_2\max(m^{2p/(np-n+2)}, m^p)$ due to the mass conservation property (\ref{eq:mass_conservation}). Using Young's inequality and the fact that $\phi \geq 1$, the above then yields
	\begin{align*}
		\int_\Omega \ue^p \phi(\ve) &\leq
		\tfrac{2\delta(p-1)}{\lambda p}t \int_\Omega |\grad \ue^\frac{p}{2}|^2 + K_4 t^{-\frac{n}{2}(p-1)} + K_3 \\
		&\leq \tfrac{2\delta(p-1)}{\lambda p}t \int_\Omega \phi(\ve)|\grad \ue^\frac{p}{2}|^2 + K_4 t^{-\frac{n}{2}(p-1)} + K_3
	\end{align*}
	with $K_4 = K_4(p, \lambda) \defs (\frac{\lambda p}{2\delta(p-1)})^{\frac{n}{2}(p-1)} K_3^{\frac{n}{2}(p-1+\frac{2}{n})}$, which applied to (\ref{eq:time_scaled_functional_ineq}) results in 
	\[
		\frac{1}{p}\frac{\d}{\d t} \left[ t^\lambda\int_\Omega \ue^p \phi(\ve) \right] + \frac{\delta}{p}t^\lambda\int_\Omega \ue^p \phi''(\ve) |\grad \ve|^2 \leq K_5 (t^{\lambda - \frac{n}{2}(p-1) - 1} + t^{\lambda - 1})
	\]
	for all $t\in (0,\tmaxeps)$ and $\eps \in (0,1)$ with $K_5 = K_5(p, \lambda) \defs \frac{\lambda}{p}\max(K_3, K_4)$. As our choice of $\lambda$ ensures that  
	\[
		\lambda  - 1 \geq \lambda - \tfrac{n}{2}(p-1) - 1 > - 1,
	\]	
	time integration combined with the fact that $\phi \geq 1$ and $\phi'' \geq 2\beta^2$ (cf.\ (\ref{eq:phi_props})) then completes the proof with $C(T,p,\lambda) \defs \frac{pK_5}{\delta(\lambda -\frac{n}{2}(p-1))} \max(1,\frac{1}{2\beta^2})(T^{\lambda - \frac{n}{2}(p-1)} + T^\lambda)$.
\end{proof}

\section{Construction of a solution candidate}\label{section:solution_construction}
Using the uniform bounds for the family $(\ue)_{\eps\in(0,1)}$ in $L^{\frac{n}{2}+\delta}(\Omega)$ established in \Cref{lemma:time_scaled_functional} as a baseline, we now devote this section to deriving sufficient parabolic Hölder bounds for our approximate solutions to construct a solution candidate for our main theorem by employing suitable compact embedding properties of said Hölder spaces. As the type of bootstrap argument, which we will use to achieve this, is rather well-established in the literature (cf.\ \cite{BellomoMathematicalTheoryKeller2015}) and will closely follow the steps laid out in \cite{HeihoffDoesStrongRepulsion2022}, we will keep most of the proofs in this section fairly brief and only go into more detail when our approach needs to deviate from the standard arguments to e.g.\ account for our lack of initial data regularity.
\\[0.5em]
Our first step is to improve the bounds gained from (\ref{eq:ue_a_priori}) using semigroup methods in a similar fashion to \cite[Lemma 4.1, Lemma 4.2]{HeihoffDoesStrongRepulsion2022}.
\begin{lemma}\label{lemma:grad_ve_bound}
	If $v_0$ satisfies (\ref{v_0_condition}), then there exists $q > n$ such that the following holds: 
	\\[0.5em]
	For each $t_0, t_1 \in \R$ with $t_1 > t_0 > 0$, there exists $C \equiv C(t_0, t_1) > 0$ such that 
	\[
		\|\ve(\cdot, t)\|_{W^{1,q}(\Omega)} \leq C 	
	\]
	for all $t \in (t_0, t_1)\cap(0,\tmaxeps)$ and $\eps \in (0,1)$.
\end{lemma}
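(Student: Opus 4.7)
The plan is to leverage the uniform $L^p$-bound for $\ue$ supplied by \Cref{lemma:time_scaled_functional} together with a Duhamel representation of $\ve$. First I would fix $\delta \in (0,1)$ as in \Cref{lemma:time_scaled_functional} and set $p \defs \frac{n}{2} + \delta$, noting crucially that $p > \frac{n}{2}$. Applying that lemma on the interval $(0,t_1)$ with $\lambda$ chosen as small as permitted yields a uniform bound $\|\ue(\cdot, t)\|_\L{p} \leq C_1(t_0, t_1)$ for all $t \in [t_0/2, t_1] \cap (0, \tmaxeps)$ and $\eps \in (0,1)$. In conjunction with $\|\ve(\cdot, t)\|_\L{\infty} \leq \|v_0\|_\L{\infty}$ from \Cref{lemma:approx_exist}, this gives
\[
	\|(\ue\ve)(\cdot, s)\|_\L{p} \leq C_2(t_0, t_1)
\]
uniformly on $[t_0/2, t_1] \cap (0, \tmaxeps)$.

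Second, I would exploit the Duhamel representation
\[
	\ve(\cdot, t) = e^{(t - t_0/2)\laplace}\ve(\cdot, t_0/2) - \int_{t_0/2}^t e^{(t-s)\laplace}(\ue \ve)(\cdot, s) \d s
\]
for $t \in (t_0, t_1) \cap (0, \tmaxeps)$, where $(e^{\tau\laplace})_{\tau \geq 0}$ denotes the Neumann heat semigroup. Applying $\grad$ to both sides and invoking the standard smoothing estimate
\[
	\|\grad e^{\tau\laplace} w\|_\L{q} \leq C \tau^{-\frac{1}{2} - \frac{n}{2}(\frac{1}{r} - \frac{1}{q})} \|w\|_\L{r}, \qquad \tau \in (0, t_1), \; 1 \leq r \leq q \leq \infty,
\]
once with $r = q$ on the semigroup term (using $\ve(\cdot, t_0/2) \in \L{\infty} \subseteq \L{q}$, with a bound independent of $\eps$ by \Cref{lemma:approx_exist}) and once with $r = p$ on the integrand, would yield
\[
	\|\grad \ve(\cdot, t)\|_\L{q} \leq C_3 + C_4 \int_{t_0/2}^t (t-s)^{-\frac{1}{2} - \frac{n}{2}(\frac{1}{p} - \frac{1}{q})} \d s.
\]

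Third, I would pick $q > n$ subject to $\frac{1}{2} + \frac{n}{2}(\frac{1}{p} - \frac{1}{q}) < 1$, equivalently $\frac{1}{p} - \frac{1}{n} < \frac{1}{q} < \frac{1}{n}$. Since $p > \frac{n}{2}$ forces $\frac{1}{p} - \frac{1}{n} < \frac{1}{n}$, this window is nonempty, and any such $q$ renders the above time integral finite with a bound depending only on $t_0$ and $t_1$. Combining the resulting $\L{q}$-bound on $\grad \ve$ with the trivial estimate $\|\ve(\cdot, t)\|_\L{q} \leq |\Omega|^{1/q}\|v_0\|_\L{\infty}$ then delivers the desired $W^{1,q}(\Omega)$-bound.

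The main obstacle is essentially parameter bookkeeping: the kernel singularity at $s = t$ must remain integrable while simultaneously $q$ must exceed $n$, and this is precisely what the exponent $p = \frac{n}{2} + \delta > \frac{n}{2}$ from \Cref{lemma:time_scaled_functional} allows. All constants must be traced to depend only on $t_0$ and $t_1$, which is ensured by starting the Duhamel integral at $t_0/2$, where $\ue$ and $\ve$ are already uniformly controlled in $\L{p}$ and $\L{\infty}$, respectively.
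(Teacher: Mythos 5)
Your proposal is correct and follows essentially the same route as the paper: a Duhamel representation of $\ve$ started at $t_0/2$, the uniform $L^{\frac{n}{2}+\delta}$-bound on $\ue$ from \Cref{lemma:time_scaled_functional} together with $\|\ve\|_\L{\infty} \leq \|v_0\|_\L{\infty}$, and the gradient smoothing estimates for the Neumann heat semigroup with $q > n$ chosen so that the kernel singularity stays integrable. The only cosmetic difference is that the paper fixes $q$ with $\frac{q}{2} = \frac{n}{2}+\delta$ rather than keeping $p$ and $q$ as separate parameters in the admissible window.
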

\begin{proof}
	We begin by fixing $t_0 > 0$ and $t_1 > t_0$. According to \Cref{lemma:time_scaled_functional}, we can then fix $q > n$ as well as $K_1 = K_1(t_0, t_1) > 0$ such that 
	\[
		\|\ue(\cdot, t)\|_\L{\frac{q}{2}} < K_1
	\] 
	for all $t\in(t_0/2, t_1)\cap(0,\tmaxeps)$ and $\eps \in (0,1)$. Using the variation-of-constants representation of $\ve$ as well as the smoothing properties of the Neumann heat semigroup (cf.\ \cite[Lemma 1.3]{WinklerAggregationVsGlobal2010}), we can then find $K_2 = K_2(t_0, t_1) > 0$ such that 
	\begin{align*}
		\|\grad \ve(\cdot, t)\|_\L{q} 
		&\leq \|\grad e^{(t-\frac{t_0}{2})\laplace}\ve(\cdot, \tfrac{t_0}{2})\|_\L{q} + \int_{t_0/2}^t \|\grad e^{(t-s)\laplace}\ue(\cdot, s) \ve(\cdot, s)\|_\L{q} \d s \\
		&\leq K_2(t-\tfrac{t_0}{2})^{-\frac{1}{2}}\|\ve(\cdot, \tfrac{t_0}{2})\|_\L{q} + K_2 \int_{t_0/2}^t (t-s)^{-\frac{1}{2}-\frac{n}{2q}} \|\ue(\cdot, s)\ve(\cdot, s)\|_\L{\frac{q}{2}} \d s \\
		&\leq  K_2(\tfrac{t_0}{2})^{-\frac{1}{2}}|\Omega|^\frac{1}{q}\|v_0\|_\L{\infty} + K_1 K_2 \|v_0\|_\L{\infty} (\tfrac{1}{2} - \tfrac{n}{2q})^{-1} (t_1 - \tfrac{t_0}{2})^{\frac{1}{2} - \frac{n}{2q}}
	\end{align*}
	for all $t \in (t_0, t_1)\cap(0,\tmaxeps)$ and $\eps \in (0,1)$ as $q > n$ ensures that $-\frac{1}{2}-\frac{n}{2q} > -1$. Combined with (\ref{eq:mass_conservation}), this completes the proof.
\end{proof}
\begin{lemma}\label{lemma:ue_linfty_bound}
	If $v_0$ satisfies (\ref{v_0_condition}), then, for all $t_0, t_1 \in \R$ with $t_1 > t_0 > 0$, there exists $C \equiv C(t_0, t_1) > 0$ such that 
	\[
		\|\ue(\cdot, t)\|_\L{\infty} \leq C	
	\]
	for all $t \in (t_0, t_1)\cap(0,\tmaxeps)$ and $\eps \in (0,1)$.
\end{lemma}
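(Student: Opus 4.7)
The plan is to use a standard semigroup-based bootstrap argument, closely analogous to the one carried out in \cite{HeihoffDoesStrongRepulsion2022}. The two inputs are the $L^p$-bound on $\ue$ from \Cref{lemma:time_scaled_functional} (for some $p_0\in (1,\frac{n}{2}+\delta]$) and the $W^{1,q}$-bound on $\ve$ with some $q>n$ from \Cref{lemma:grad_ve_bound}. The idea is to iteratively improve the uniform integrability of $\ue$ until it reaches $L^\infty$.

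First, I would fix $t_1>t_0>0$ and pick a finite increasing sequence of intermediate times $\tfrac{t_0}{2}=\tau_0<\tau_1<\cdots<\tau_N=t_0$. At the $k$-th step, on the interval $(\tau_k,t_1)\cap (0,\tmaxeps)$, I would use the variation-of-constants representation
\[
    \ue(\cdot,t)=e^{(t-\tau_{k-1})\laplace}\ue(\cdot,\tau_{k-1})-\chi\int_{\tau_{k-1}}^{t}e^{(t-s)\laplace}\div\bigl(\ue(\cdot,s)\grad\ve(\cdot,s)\bigr)\d s
\]
together with the standard Neumann heat semigroup smoothing estimates (cf.\ \cite[Lemma 1.3]{WinklerAggregationVsGlobal2010}) of the form
\[
    \|e^{\tau\laplace}f\|_\L{p'}\leq C\tau^{-\frac{n}{2}(\frac{1}{p}-\frac{1}{p'})}\|f\|_\L{p},\qquad \|e^{\tau\laplace}\div f\|_\L{p'}\leq C\tau^{-\frac{1}{2}-\frac{n}{2}(\frac{1}{\loge}-\frac{1}{p'})}\|f\|_\L{\loge}.
\]
If inductively $\|\ue(\cdot,\tau_{k-1})\|_\L{p_{k-1}}$ and $\|\ue(\cdot,s)\|_\L{p_{k-1}}$ are uniformly bounded on $(\tau_{k-1},t_1)$, then H\"older's inequality combined with the $L^q$-bound on $\grad\ve$ gives $\ue\grad\ve\in L^{\loge_k}(\Omega)$ uniformly with $\tfrac{1}{\loge_k}=\tfrac{1}{p_{k-1}}+\tfrac{1}{q}$, and the above semigroup estimate produces a uniform bound in $\L{p_k}$ for any $p_k$ satisfying $\tfrac{1}{p_k}>\tfrac{1}{p_{k-1}}+\tfrac{1}{q}-\tfrac{1}{n}$, which is exactly the condition guaranteeing convergence of the time integral.

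Because $q>n$ gives $\tfrac{1}{n}-\tfrac{1}{q}>0$, the reciprocal of the integrability exponent decreases by a fixed positive amount at each step. Hence after finitely many iterations the inductive exponent satisfies $\tfrac{1}{p_{N-1}}+\tfrac{1}{q}<\tfrac{1}{n}$, and a final application of the same semigroup estimate with $\tfrac{1}{p_N}=0$ yields the desired uniform bound $\|\ue(\cdot,t)\|_\L{\infty}\leq C(t_0,t_1)$ on $(t_0,t_1)\cap(0,\tmaxeps)$.

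The main obstacle is not any single estimate but rather the bookkeeping: one must track how many iterations are needed, control the constants generated at each step from the negative powers $(t-\tau_{k-1})^{-\alpha}$, and make sure the starting bound $\|\ue(\cdot,\tau_0)\|_\L{p_0}$ is itself uniform in $\eps$, which is precisely what \Cref{lemma:time_scaled_functional} provides since $\tau_0=t_0/2>0$. Since this type of bootstrap is entirely routine in the chemotaxis literature and is explicitly flagged in the paper as being handled briefly, I expect the proof to be written very compactly, essentially reducing to citing the semigroup estimates and invoking the two preceding lemmas.
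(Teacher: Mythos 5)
Your plan is the natural one, but as stated it has a genuine gap precisely in the low-dimensional/low-integrability regime that this paper must cover. The starting exponent supplied by \Cref{lemma:time_scaled_functional} is only $p_0\in(1,\tfrac n2+\delta]$, and the exponent $q>n$ produced by \Cref{lemma:grad_ve_bound} is in fact only slightly larger than $n$ (its proof needs $\|\ue\|_{\L{q/2}}$ under control, so effectively $q\le n+2\delta$). Consequently, for $n=2$ (and more generally whenever $\delta$ is small, e.g.\ when $\|v_0\|_{\L{\infty}}$ is close to the threshold in (\ref{v_0_condition})) one has $\tfrac1{p_0}+\tfrac1q>1$, so your very first H\"older step places $\ue\grad\ve$ only in $L^{r_1}(\Omega)$ with $r_1<1$. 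The smoothing estimate for $e^{\tau\laplace}\div$ is not available for exponents below $1$, and the product is not even controlled in $\L{1}$ by the two stated inputs alone (the conjugate exponents $p_0'$ and $q'$ both exceed what is available). Hence the iteration cannot get off the ground in dimension $2$, while the lemma is asserted for all $n\ge2$; for $n\ge3$ your finite iteration (with the bookkeeping you describe, and starting each Duhamel representation slightly inside the previous time interval) does work.

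The paper closes exactly this hole by a one-step absorption argument instead of a finite iteration: it introduces the weighted quantity $M_\eps(T)=\sup_{t}\bigl\|(t-\tfrac{t_0}{2})^{n/q}\ue(\cdot,t)\bigr\|_{\L{\infty}}$, chooses $r\in(n,q)$ with $r>\tfrac q2$, estimates $\|\ue\grad\ve\|_{\L{r}}\le\|\ue\|_{\L{qr/(q-r)}}\|\grad\ve\|_{\L{q}}$, and interpolates $\|\ue\|_{\L{qr/(q-r)}}$ between $\L{q/2}$ and $\L{\infty}$, so that the unknown quantity $M_\eps(T)$ re-enters the Duhamel estimate only with the sublinear power $\tfrac{3r-q}{2r}\in(0,1)$ and can then be absorbed via Young's inequality, giving a uniform bound on $M_\eps(T)$ and hence the claimed $L^\infty$ bound. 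To repair your proof you would need this interpolation-against-the-unknown device (or an equivalent mechanism) rather than raising the integrability exponent stepwise from $p_0$.
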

\begin{proof}
	We begin by fixing $t_0 > 0$ and $t_1 > t_0$. According to \Cref{lemma:time_scaled_functional} and \Cref{lemma:grad_ve_bound}, we can then fix $q > n$ and $K_1 = K_1(t_0, t_1) > 0$ such that 
	\[
		\|\ue(\cdot, t)\|_\L{\frac{q}{2}} \leq K_1 \stext{ and } \|\grad \ve(\cdot, t)\|_\L{q} \leq K_1	
	\] 
	for all $t\in[t_0/2, t_1)\cap(0,\tmaxeps)$ and $\eps \in (0,1)$. Let then $r \in (n, q)$ be such that $r > \frac{q}{2}$ and further 
	\[
		M_{\eps}(T) \defs \sup_{t\in(t_0/2, t_1)\cap (0,T)} \left\|(t-\tfrac{t_0}{2})^\frac{n}{q}\ue(\cdot, t) \right\|_\L{\infty} < \infty
	\]
	for all $T \in (t_0,\tmaxeps)$ and $\eps \in (0,1)$.
	\\[0.5em]	
	Now using the variation-of-constants representation of $\ue$, the smoothing properties of the Neumann heat semigroup (cf.\ \cite[Lemma 1.3]{WinklerAggregationVsGlobal2010}) as well as the Hölder inequality, we can find $K_2 = K_2(t_0, t_1) > 0$ such that 
	\begin{align*}
		&\left\| (t-\tfrac{t_0}{2})^\frac{n}{q}\ue(\cdot, t) \right\|_\L{\infty} \\
		&\leq \left\|(t-\tfrac{t_0}{2})^\frac{n}{q}e^{(t-\tfrac{t_0}{2})\laplace}\ue(\cdot, \tfrac{t_0}{2}) \right\|_\L{\infty} + \left\| (t-\tfrac{t_0}{2})^\frac{n}{q}\int_{t_0/2}^t e^{(t-s)\laplace} \div (\ue(\cdot, s) \grad \ve(\cdot, s)) \d s  \right\|_\L{\infty} \\
		&\leq K_2 \|\ue(\cdot, \tfrac{t_0}{2})\|_\L{\frac{q}{2}} + K_2(t-\tfrac{t_0}{2})^\frac{n}{q} \int_{t_0/2}^t (t-s)^{-\frac{1}{2} - \frac{n}{2r} }\|\ue(\cdot, s)\grad \ve(\cdot, s)\|_\L{r} \d s \\
		&\leq K_1 K_2 + K_1 K_2 (t-\tfrac{t_0}{2})^\frac{n}{q} \int_{t_0/2}^t (t-s)^{-\frac{1}{2} - \frac{n}{2r} } \|\ue(\cdot, s)\|_\L{\frac{qr}{q-r}} \d s \\
		&\leq K_1 K_2 + K_1^{\frac{q+r}{2r}} K_2 (t-\tfrac{t_0}{2})^\frac{n}{q} \int_{t_0/2}^t (t-s)^{-\frac{1}{2} - \frac{n}{2r} }  \|\ue(\cdot, s)\|^\frac{3r-q}{2r}_\L{\infty}\d s\\
		&\leq K_1 K_2 + K_1^{\frac{q+r}{2r}} K_2  (M_{\eps}(T))^\frac{3r-q}{2r} (t-\tfrac{t_0}{2})^\frac{n}{q} \int_{t_0/2}^t (t-s)^{-\frac{1}{2} - \frac{n}{2r} }  (s-\tfrac{t_0}{2})^{-\frac{n}{q}\frac{3r-q}{2r}} \d s \numberthis \label{eq:ue_linfty_estimatation}
	\end{align*}
	for all $t \in (t_0/2, t_1)\cap(0,T)$, $T \in (t_0,\tmaxeps)$ and $\eps \in (0,1)$. Note here that $\frac{qr}{q-r} > \frac{q}{2}$ and $\frac{3r-q}{2r} \in (0,1)$ as $q > r > \frac{q}{2}$.
	\\[0.5em]
	Now applying the linear substitution $s \mapsto (t - \frac{t_0}{2})s + \frac{t_0}{2}$ to the last remaining integral term yields
	\[
		\int_{t_0/2}^t (t-s)^{-\frac{1}{2} - \frac{n}{2r} }  (s-\tfrac{t_0}{2})^{-\frac{n}{q}\frac{3r-q}{2r}} \d s = (t-\tfrac{t_0}{2})^{1 - \frac{1}{2}-\frac{n}{2r}-\frac{n}{q}\frac{3r-q}{2r}} \int_0^1 (1-s)^{-\frac{1}{2} - \frac{n}{2r} } s^{-\frac{n}{q}\frac{3r-q}{2r}} \d s = K_3 (t-\tfrac{t_0}{2})^{\frac{1}{2}-\frac{3n}{2q}}
	\] 
	for all $t\in(t_0/2, t_1)$ with $K_3 \defs B(1 -\frac{n}{q}\frac{3r-q}{2r}, \frac{1}{2} - \frac{n}{2r})$, where $B$ is the beta function. Notably this is only possible because $q > r > n$ ensures that $-\frac{1}{2} - \frac{n}{2r} > -1$ as well as $-\frac{n}{q}\frac{3r-q}{2r} > -\frac{3r-q}{2r} > -1$ and thus that all of the integrals involved are finite.
	Applying the above substitution to (\ref{eq:ue_linfty_estimatation}), we then gain that 
	\begin{equation}\label{eq:M_eps_ineq}
		M_{\eps}(T) \leq K_4 + K_4 (M_{\eps}(T))^\frac{3r-q}{2r} 
	\end{equation}
	for all $T \in (t_0,\tmaxeps)$ and $\eps \in (0,1)$ with $K_4 = K_4(t_0, t_1) \defs \max(K_1 K_2, K_1^{\frac{q+r}{2r}} K_2 K_3 (t_1 - \frac{t_0}{2})^{\frac{1}{2}-  \frac{n}{2q}})$ as $q > n$ implies that $\frac{1}{2} - \frac{3n}{2q} + \frac{n}{q} = \frac{1}{2}-  \frac{n}{2q} > 0$.
	As $\frac{3r-q}{2r} \in (0,1)$, the inequality in (\ref{eq:M_eps_ineq}) immediately implies that 
	\[
		M_{\eps}(T) \leq K_5
	\]
	for all $T \in (t_0,\tmaxeps)$ and $\eps \in (0,1)$ with $K_5 = K_5(t_0, t_1) \defs 2K_4 + (2K_4)^\frac{2r}{q-r}$ by Young's inequality.
	From this, it directly follows that
	\[
		\|\ue(\cdot, t)\|_\L{\infty} \leq (\tfrac{t_0}{2})^{-\frac{n}{q}}K_5
	\]
	for all $t \in (t_0, t_1)\cap(0,\tmaxeps)$ and $\eps \in (0,1)$.
\end{proof}\noindent
The above lemma is now sufficient to rule out finite-time blowup for our approximate solutions according to the blowup criterion in (\ref{eq:blowup_criterion}).
\begin{corollary}
	If $v_0$ satisfies (\ref{v_0_condition}), then $\tmaxeps = \infty$ for all $\eps \in (0,1)$.
\end{corollary}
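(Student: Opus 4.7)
The plan is to prove the corollary by contradiction using the blowup criterion (\ref{eq:blowup_criterion}) from \Cref{lemma:approx_exist} together with the uniform $L^\infty$ bound from \Cref{lemma:ue_linfty_bound}.

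Fix $\eps \in (0,1)$ and suppose for contradiction that $\tmaxeps < \infty$. I would then choose $t_0 \defs \tmaxeps/2 > 0$ and $t_1 \defs \tmaxeps + 1 > t_0$, which are both independent of whether the solution actually exists up to $t_1$; the point is that \Cref{lemma:ue_linfty_bound} provides some $C \equiv C(t_0, t_1) > 0$ with
\[
	\|\ue(\cdot, t)\|_\L{\infty} \leq C \quad \text{for all } t \in (t_0, t_1) \cap (0, \tmaxeps),
\]
and with the above choice this interval equals $(\tmaxeps/2, \tmaxeps)$. Hence $\limsup_{t \nearrow \tmaxeps} \|\ue(\cdot, t)\|_\L{\infty} \leq C < \infty$, which directly contradicts (\ref{eq:blowup_criterion}). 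Thus $\tmaxeps = \infty$.

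There is no real obstacle here; the corollary is a one-line consequence of \Cref{lemma:ue_linfty_bound}. The only subtlety worth mentioning is that \Cref{lemma:ue_linfty_bound} was stated for arbitrary $t_1 > t_0 > 0$ without any restriction tying $t_1$ to $\tmaxeps$, so its application for $t_1 > \tmaxeps$ is legitimate and yields a bound on the whole of $(\tmaxeps/2, \tmaxeps)$. Since $\eps \in (0,1)$ was arbitrary, the conclusion follows for every $\eps$.
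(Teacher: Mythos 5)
Your argument is correct and is exactly the reasoning the paper intends: the paper states this corollary without a separate proof, remarking only that \Cref{lemma:ue_linfty_bound} together with the blowup criterion (\ref{eq:blowup_criterion}) rules out finite-time blowup, which is precisely your contradiction argument. Your added observation that \Cref{lemma:ue_linfty_bound} is stated for arbitrary $t_1 > t_0 > 0$ (via the intersection with $(0,\tmaxeps)$) and hence applies with $t_1 > \tmaxeps$ is a correct and worthwhile clarification of why the application is legitimate.
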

\noindent
The remainder of the bootstrap argument is now a straightforward combination of standard parabolic regularity theory from the literature and we will thus keep the following proof relatively brief as it follows roughly the same outline as e.g.\ \cite[Lemma 4.3, Lemma 4.4]{HeihoffDoesStrongRepulsion2022}.
\begin{lemma}\label{lemma:boostrap}
	If $v_0$ satisfies (\ref{v_0_condition}), then, for each $t_0, t_1 \in \R$ with $t_1 > t_0 > 0$, there exist $C \equiv C(t_0, t_1) > 0$ and $\theta \equiv \theta(t_0, t_1) \in (0,1)$ such that 
	\[
		\|\ue\|_{C^{2+\theta,1+\frac{\theta}{2}}(\overline{\Omega}\times[t_0, t_1]) } \leq C	 \stext{ and } \|\ve\|_{C^{2+\theta,1+\frac{\theta}{2}}(\overline{\Omega}\times[t_0, t_1]) } \leq C	
	\]
	for all $\eps \in (0,1)$.
\end{lemma}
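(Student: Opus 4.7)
The plan is to run a standard parabolic bootstrap on nested subintervals of $[t_0, t_1]$, initializing the bootstrap at an auxiliary time $t'$ strictly between $0$ and $t_0$ so as to sidestep the $\eps$-dependent singularity of the approximate initial data as $\eps \searrow 0$. Setting for instance $t' \defs t_0/2$, \Cref{lemma:ue_linfty_bound} and \Cref{lemma:grad_ve_bound} together with (\ref{eq:mass_conservation}) already provide $\eps$-independent bounds for $\|\ue\|_{L^\infty(\Omega \times (t', t_1))}$, for $\|\grad \ve(\cdot, t)\|_{L^q(\Omega)}$ uniformly in $t \in (t', t_1)$ with some $q > n$, and for $\|\ve\|_{L^\infty(\Omega \times (t', t_1))}$.

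First, I would apply parabolic $L^p$ maximal regularity to $\vet = \laplace \ve - \ue \ve$ on $(t', t_1)$: since the source $\ue \ve$ is uniformly bounded and $\ve(\cdot, t')$ is already uniformly bounded in $W^{1,q}$ by \Cref{lemma:grad_ve_bound}, $\ve$ is uniformly controlled in $W^{2,1}_p(\Omega \times ((t' + t_0)/2, t_1))$ for any $p \in (1,\infty)$. Choosing $p$ large enough and invoking the embedding into parabolic Hölder spaces then yields a uniform bound on $\ve$ in $C^{1 + \theta_1, (1+\theta_1)/2}$ for some $\theta_1 \in (0,1)$.

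Second, with $\grad \ve$ now uniformly Hölder continuous, I would view the first equation as the linear divergence-form parabolic equation
\[
	\uet = \div\bigl(\grad \ue - \chi \ue \grad \ve\bigr)
\]
with bounded coefficients and vanishing right-hand side. The classical Hölder regularity result of Ladyzhenskaya--Solonnikov--Uralceva, applied analogously to the bootstrap in \cite{HeihoffDoesStrongRepulsion2022}, then gives a uniform bound on $\ue$ in $C^{\theta_2, \theta_2/2}$ on a further slightly shrunk subinterval, for some $\theta_2 \in (0,1)$.

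Third, I would close the argument by invoking parabolic Schauder theory twice: feeding the Hölder bound for $\ue$ back into $\vet = \laplace \ve - \ue \ve$ upgrades $\ve$ to a uniform $C^{2+\theta, 1+\theta/2}$ bound, after which rewriting the first equation in non-divergence form
\[
	\uet = \laplace \ue - \chi \grad \ve \cdot \grad \ue - \chi \ue \laplace \ve
\]
with Hölder-continuous coefficients and right-hand side and applying Schauder theory once more delivers the matching estimate for $\ue$. Performing each step on a slightly smaller subinterval ensures that the final bounds hold on the target $[t_0, t_1]$. The main obstacle is not mathematical depth but the careful bookkeeping of nested time intervals: at every step the effective initial time must lie strictly inside the preceding interval so that no regularity of $u_{0, \eps}$ or $v_{0, \eps}$ is ever invoked, which is precisely what keeps all constants uniform in $\eps$ despite the loss of regularity of the original initial data as $\eps \searrow 0$.
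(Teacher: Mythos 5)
Your proposal is correct in substance, but it follows a genuinely different chain of citations than the paper. The paper first applies the Hölder estimate of Porzio--Vespri \cite{PorzioVespriHoelder} to \emph{both} $\ue$ and $\ve$ (using exactly the inputs you list: \Cref{lemma:ue_linfty_bound}, \Cref{lemma:grad_ve_bound} and the $L^\infty$ bound on $\ve$), then upgrades $\ve$ to $C^{2+\theta,1+\theta/2}$ via the linear Schauder theory in \cite{LadyzenskajaLinearQuasilinearEquations1988}, then invokes Lieberman's gradient Hölder estimate \cite{LiebermanHolderContinuityGradient1987} to make $\grad\ue$ Hölder continuous, and only then applies Schauder theory to the $u$-equation. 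You instead obtain the first regularity gain for $\ve$ from maximal $L^p$ regularity plus the parabolic Sobolev embedding (giving $C^{1+\theta_1,(1+\theta_1)/2}$ at once, slightly more than the paper's first step), get the Hölder bound for $\ue$ from the De Giorgi--Nash--Moser/LSU theory for divergence-form equations with bounded coefficients (playing the role Porzio--Vespri plays in the paper; note the conormal boundary condition $(\grad\ue-\chi\ue\grad\ve)\cdot\nu=0$ coincides with the Neumann condition since $\grad\ve\cdot\nu=0$), and then bypass Lieberman entirely by rewriting the $u$-equation in non-divergence form and applying Schauder theory with the coefficients $-\chi\grad\ve$ and $-\chi\laplace\ve$, which are Hölder once $\ve\in C^{2+\theta,1+\theta/2}$ and with right-hand side controlled through the already established Hölder bound on $\ue$. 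Both routes deliver the same conclusion with constants independent of $\eps$; your version is arguably leaner (three tools instead of four), while the paper's detour through Lieberman keeps the $u$-equation in divergence form throughout. In either case the step you flag as mere bookkeeping -- nesting the time intervals and cutting off in time so that no regularity of $u_{0,\eps}$, $v_{0,\eps}$ enters, and checking the (trivial, by choosing the cutoff flat near its onset) compatibility conditions at the artificial initial times for the maximal regularity and Schauder estimates -- is exactly the ``straightforward cutoff function argument'' the paper also relies on, so no genuine gap remains.
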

\begin{proof}
	Using the bounds established in \Cref{lemma:grad_ve_bound} and \Cref{lemma:ue_linfty_bound} allows us to 
	to apply parabolic Hölder regularity theory due to Porzio and Vespri (cf.\ \cite[Theorem 1.3]{PorzioVespriHoelder}) to find $\theta_1 = \theta_1(t_0, t_1) \in (0,1)$ such that both $(\ue)_{\eps\in(0,1)}$ and $(\ve)_{\eps\in(0,1)}$ are uniformly bounded in $C^{\theta_1, \frac{\theta_1}{2}}(\overline{\Omega}\times[\frac{t_0}{8},t_1])$, where the bounds here and similar bounds later in the proof only depend on $t_0$ and $t_1$.
	Using further parabolic Hölder regularity theory from \cite[p.170 and p.320]{LadyzenskajaLinearQuasilinearEquations1988} combined with a straightforward cutoff function argument, we then gain $\theta_2 = \theta_2(t_0, t_1) \in (0,1)$ such that the family $(\ve)_{\eps \in (0,1)}$ is uniformly bounded in $C^{2+\theta_2, 1+\frac{\theta_2}{2}}(\overline{\Omega}\times[\frac{t_0}{4}, t_1])$.
	We can then use a theorem due to Lieberman (cf.\ \cite[Theorem 1.1]{LiebermanHolderContinuityGradient1987}) and another cutoff function argument to find $\theta_3 = \theta_3(t_0, t_1) \in (0,1)$ such that the family $(\grad \ue)_{\eps \in (0,1)}$ is uniformly bounded in $C^{\theta_3, \frac{\theta_3}{2}}(\overline{\Omega}\times[\frac{t_0}{2},t_1])$.
	This then yet again allows us to apply regularity results from \cite[p.170 and p.320]{LadyzenskajaLinearQuasilinearEquations1988} in a similar fashion as before to gain $\theta_4 = \theta_3(t_0, t_1) \in (0,1)$ such that the family $(\ue)_{\eps\in(0,1)}$ is uniformly bounded in $C^{2+\theta_4, 1+\frac{\theta_4}{2}}(\overline{\Omega}\times[t_0, t_1])$.
	\\[0.5em]
	Setting $\theta = \theta(t_0, t_1) \defs \min(\theta_2, \theta_4)$ then completes the proof.
\end{proof}\noindent
Using the above bounds combined with the Arzelà--Ascoli compact embedding theorem, we can now construct candidates for our desired solutions as limits of their approximate counterparts. Given the strength of the bounds outside of $t = 0$ in the above lemma, it is then easy to see that all solution properties apart from those concerned with the initial data immediately translate from our approximate solutions to their limits.
\begin{lemma}\label{lemma:construction}
	If $v_0$ satisfies (\ref{v_0_condition}), then there exist a null sequence $(\eps_j)_{j\in\N} \subseteq (0,1)$ as well as nonnegative functions $u, v \in C^{2,1}(\overline{\Omega}\times(0,\infty))$ such that 
	\begin{equation}\label{eq:solution_upper_bound}
		\|v(\cdot, t)\|_\L{\infty} \leq \|v_0\|_\L{\infty}	
	\end{equation}
	for all $t > 0$, such that 
	\begin{align}
		\ue &\rightarrow u \;\;\;\; \text{ in } C^{2,1}(\overline{\Omega}\times[t_0, t_1]), \label{eq:u_solution_convergence} \\
		\ve &\rightarrow v \;\;\;\; \text{ in } C^{2,1}(\overline{\Omega}\times[t_0, t_1])  \label{eq:v_solution_convergence} 
	\end{align}
	for all $t_0, t_1 \in \R$ with $t_1 > t_0 > 0$ as $\eps = \eps_j \searrow 0$, and such that $(u,v)$ is a classical solution to (\ref{problem}).
\end{lemma}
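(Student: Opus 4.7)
The plan is to apply a standard Arzelà--Ascoli-type compactness and diagonal-subsequence argument, using the uniform parabolic Hölder bounds of \Cref{lemma:boostrap} as the key input. First, I would fix an exhausting sequence of time intervals, e.g.\ $[\tau_k, T_k] \defs [\tfrac{1}{k}, k]$ for $k \in \N$, and invoke \Cref{lemma:boostrap} on each such interval to obtain $\theta_k \in (0,1)$ and a uniform-in-$\eps$ bound for $(\ue)_{\eps\in(0,1)}$ and $(\ve)_{\eps\in(0,1)}$ in the parabolic Hölder space $C^{2+\theta_k, 1+\frac{\theta_k}{2}}(\overline{\Omega}\times[\tau_k, T_k])$. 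The compact embedding of this space into $C^{2,1}(\overline{\Omega}\times[\tau_k, T_k])$ (itself a consequence of the classical Arzelà--Ascoli theorem applied to functions and all derivatives appearing in the $C^{2,1}$ norm) then yields, for each $k$, a null subsequence $(\eps_j^{(k)})_{j\in\N}$ along which both $\ue$ and $\ve$ together with the relevant derivatives converge uniformly on $\overline{\Omega}\times[\tau_k, T_k]$. I would further arrange these subsequences to be successively nested in $k$.

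Second, I would pass to the diagonal subsequence $(\eps_j)_{j\in\N}$ defined by $\eps_j \defs \eps_j^{(j)}$. For any $t_1 > t_0 > 0$ we can choose $k$ so large that $[t_0, t_1] \subseteq [\tau_k, T_k]$, and from some index on $(\eps_j)$ is a subsequence of $(\eps_j^{(k)})$; this immediately yields limits $u, v \in C^{2,1}(\overline{\Omega}\times(0,\infty))$ for which (\ref{eq:u_solution_convergence}) and (\ref{eq:v_solution_convergence}) hold. Because these modes of convergence are strong enough to pass to the pointwise limit in every term of both equations of (\ref{problem}) as well as in the Neumann boundary conditions, the pair $(u,v)$ directly inherits the status of classical solution from $(\ue, \ve)$. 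Nonnegativity similarly passes from \Cref{lemma:approx_exist} to the limit via pointwise convergence, as does the bound (\ref{eq:solution_upper_bound}), which follows from the second part of (\ref{eq:mass_conservation}) evaluated along $\eps = \eps_j \searrow 0$.

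I do not expect a genuine obstacle in this step: it is really an assembly of already-proven compactness estimates with a textbook diagonal extraction, and the role of this lemma is essentially to package the previous section's bootstrap into a bona fide solution object. The only points requiring a modicum of care are the nesting of subsequences across the intervals $[\tau_k, T_k]$ so that the diagonal sequence converges on \emph{every} compact subinterval of $(0,\infty)$, and noting that the $C^{2+\theta_k,1+\theta_k/2}$-bounds also guarantee continuity of $u, v$ and their derivatives across the overlap of consecutive intervals so that the resulting limit is globally $C^{2,1}$ on $\overline{\Omega}\times(0,\infty)$. The delicate matter of attaining the initial data $(u_0, v_0)$ in the senses (\ref{u_continuity}) and (\ref{v_continuity}) is deliberately deferred to \Cref{section:initial_data_continuity}, where the a priori bound (\ref{eq:grad_ue_a_priori}) on the taxis term will be indispensable.
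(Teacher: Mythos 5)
Your proposal is correct and follows essentially the same route as the paper: uniform $C^{2+\theta_k,1+\theta_k/2}$-bounds from \Cref{lemma:boostrap} on the intervals $[\frac{1}{k},k]$, compact embedding into $C^{2,1}$ via Arzel\`a--Ascoli, a nested diagonal extraction, and inheritance of the solution property, nonnegativity and (\ref{eq:solution_upper_bound}) from \Cref{lemma:approx_exist} through the strong convergence. No gaps to report.
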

\begin{proof}	
	For every $k \in \N$, we can find $\theta_k \in (0,1)$ such that the families $(\ue)_{\eps \in (0,1)}$ and $(\ve)_{\eps \in (0,1)}$ are uniformly bounded in $C^{2+ \theta_k,1 + \frac{\theta_k}{2}}(\overline{\Omega}\times[\frac{1}{k}, k])$ as a consequence of \Cref{lemma:boostrap}. As the spaces $C^{2+ \theta_k,1 + \frac{\theta_k}{2}}(\overline{\Omega}\times[\frac{1}{k}, k])$ embed compactly into the spaces $C^{2, 1}(\overline{\Omega}\times[\frac{1}{k}, k])$ for all $k \in \N$ due to e.g.\ the Arzelà--Ascoli theorem, a straightforward diagonal sequence argument then allows us to construct functions $u,v \in C^{2,1}(\overline{\Omega}\times(0,\infty))$ with our desired convergence properties (\ref{eq:u_solution_convergence}) and (\ref{eq:v_solution_convergence}) by successively extracting convergent subsequence in $C^{2, 1}(\overline{\Omega}\times[\frac{1}{k}, k])$ for increasing values of $k\in\N$. As the resulting convergence properties ensure that all terms in the system (\ref{problem}) converge pointwise on $\Omega\times(0,\infty)$ and $\partial\Omega\times(0,\infty)$, respectively, it also follows immediately that $(u,v)$ is a classical solution to (\ref{problem}) as this was already the case for each $(\ue, \ve)$. Nonegativity of $u$ and $v$ as well as the upper bound (\ref{eq:solution_upper_bound}) are further properties inherited from the approximate solutions due to our strong convergence properties and the basic solution properties laid out in \Cref{lemma:approx_exist}. 
\end{proof}

\section{Continuity in $t = 0$}\label{section:initial_data_continuity}
Having now constructed our solution candidate $(u,v)$ in \Cref{lemma:construction}, it remains to show that said solution candidate is connected to the initial data $(u_0, v_0)$ in a sensible fashion, i.e.\ we want to show that $u$ and $v$ are continuous in $t = 0$ in some appropriate topology and with the correct values at $t = 0$. As we will see, this will prove rather more involved for the first solution component than the second solution component due to some additional effort necessary to handle the taxis term. Nonetheless in both cases, the approach is the same at a fundamental level. We first show that our approximate solutions were already uniformly continuous in $t = 0$ in some appropriate sense and then show that this property survives the limit process due to its uniformity.
\\[0.5em]
We begin by treating the first solution component $u$. As our first step toward deriving the aforementioned uniform continuity property, we will make crucial use of the bound (\ref{eq:grad_ue_a_priori}) from \Cref{lemma:time_scaled_functional} to show that a certain space-time integral connected to the taxis becomes uniformly small as the upper bound of the integration time interval approaches zero.
\begin{lemma}\label{lemma:continuity_taxis_bound}
If $v_0$ satisfies (\ref{v_0_condition}), then there exist $C > 0$ and $\alpha > 0$ such that 
\[
	\int_0^t \|\ue(\cdot, s) \grad \ve(\cdot, s)\|_\L{1} \d s \leq Ct^\alpha	
\]
for all $t\in(0,1)$ and $\eps \in (0,1)$. 
\end{lemma}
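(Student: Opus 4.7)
The plan is to convert $\|\ue \grad \ve\|_\L{1}$ into a quantity controlled by the second a priori estimate $\int_0^t s^\lambda \int_\Omega \ue^p |\grad \ve|^2$ from \Cref{lemma:time_scaled_functional}, using two applications of the Cauchy--Schwarz inequality together with mass conservation. The only delicate point will be simultaneously satisfying the parameter constraints that arise, but these turn out to be compatible for every $n \geq 2$.

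First, I would choose some $p \in (1, \min(1+\tfrac{2}{n}, \tfrac{n}{2}+\delta)]$, which is nonempty since we may take $p$ arbitrarily close to $1$. Splitting $\ue \cdot |\grad \ve| = \ue^{(2-p)/2} \cdot \ue^{p/2}|\grad \ve|$ and applying the spatial Cauchy--Schwarz inequality yields
\[
	\int_\Omega \ue |\grad \ve| \leq \Big( \int_\Omega \ue^{2-p} \Big)^{1/2} \Big( \int_\Omega \ue^p |\grad \ve|^2 \Big)^{1/2}.
\]
Since $2-p \in (0,1)$, the map $s \mapsto s^{2-p}$ is concave, so by Jensen's inequality combined with the mass conservation identity (\ref{eq:mass_conservation}),
\[
	\int_\Omega \ue^{2-p} \leq |\Omega|^{p-1} m^{2-p} \defs K_1^2,
\]
uniformly in $\eps$ and $t$. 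Hence $\int_\Omega \ue |\grad \ve| \leq K_1 (\int_\Omega \ue^p |\grad \ve|^2)^{1/2}$.

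Next, I would fix $\lambda \in (\frac{n}{2}(p-1), 1)$, an interval which is nonempty precisely because $p < 1+\frac{2}{n}$. Integrating in time and inserting the weight $s^{\lambda/2}$ before applying Cauchy--Schwarz gives
\[
	\int_0^t \int_\Omega \ue |\grad \ve| \d s \leq K_1 \Big( \int_0^t s^{-\lambda} \d s \Big)^{1/2} \Big( \int_0^t s^\lambda \int_\Omega \ue^p |\grad \ve|^2 \d x \d s \Big)^{1/2}.
\]
The first factor equals $(\frac{t^{1-\lambda}}{1-\lambda})^{1/2}$, which is finite precisely because $\lambda < 1$, and the second factor is uniformly bounded by a constant $C(1, p, \lambda)$ via \Cref{lemma:time_scaled_functional} applied with $T = 1$. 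Setting $\alpha \defs \frac{1-\lambda}{2} > 0$ and combining all constants yields the claim for all $t \in (0,1)$ and $\eps \in (0,1)$.

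The main obstacle is the simultaneous parameter requirement $\frac{n}{2}(p-1) < \lambda < 1$, which forces $p < 1 + \frac{2}{n}$; in the borderline case $n = 2$ one additionally has the upper bound $p \leq 1+\delta$ coming from \Cref{lemma:time_scaled_functional}, but since both constraints become loose as $p \searrow 1$, the choice is always possible. Notably, this is the one place in the construction where the additional information about the dissipative term $\int_\Omega \ue^p \phi''(\ve)|\grad \ve|^2$ in (\ref{eq:energy})—information deliberately retained in \Cref{lemma:time_scaled_functional} rather than discarded as in \cite{TaoBoundednessChemotaxisModel2011}—is genuinely required; without it, a mere uniform-in-time $L^p$ bound for $\ue$ would not suffice to control the taxis flux near $t = 0$.
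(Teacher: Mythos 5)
Your proof is correct and rests on the same two ingredients as the paper's own argument: the time-weighted bound (\ref{eq:grad_ue_a_priori}) for some admissible $p$ together with $\lambda < 1$, and the mass bound (\ref{eq:mass_conservation}); the paper implements the splitting of $\ue|\grad\ve|$ by a single pointwise Young's inequality with $p=\frac{n+\delta}{n}$ and weights $s^{\pm(\lambda+\alpha)}$, whereas you use Cauchy--Schwarz in space followed by Cauchy--Schwarz in time with $p$ close to $1$ -- an essentially cosmetic difference. The only small slip is that your stated range for $p$ is closed at $\min(1+\frac{2}{n},\frac{n}{2}+\delta)$, and for $n\ge 3$ the right endpoint $p=1+\frac{2}{n}$ would make the interval $(\frac{n}{2}(p-1),1)$ for $\lambda$ empty, so the range should be taken open on the right (or, as you yourself note, one simply picks $p$ sufficiently close to $1$).
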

\begin{proof}
	Let $\delta \in (0,1)$ be as in \Cref{lemma:time_scaled_functional}.
	We then fix $\lambda \in (\frac{\delta}{2}, 1)$ and $\alpha \in (0,1)$ such that \[
		\lambda + \alpha < 1 - \alpha.
	\] 
	Using Young's inequality as well as our choice of $\alpha$, we now note that
	\begin{align*}
		\int_0^t \|\ue(\cdot, s) \grad \ve(\cdot, s)\|_\L{1} \d s 
		&\leq \int_0^t s^{\lambda + \alpha} \int_\Omega \ue^\frac{n+\delta}{n}(x, s)|\grad \ve(x, s)|^2 \d x \d s + \int_0^t s^{-\lambda - \alpha} \int_\Omega \ue^\frac{n-\delta}{n}(x, s) \d x \d s \\
		&\leq t^\alpha \int_0^t s^{\lambda} \int_\Omega \ue^\frac{n+\delta}{n}(x, s)|\grad \ve(x, s)|^2 \d x \d s + m^\frac{n-\delta}{n}|\Omega|^\frac{\delta}{n}\int_0^t s^{\alpha - 1} \d s \\
		&= t^\alpha \int_0^t s^{\lambda} \int_\Omega \ue^\frac{n+\delta}{n}(x, s)|\grad \ve(x, s)|^2 \d x \d s + \tfrac{1}{\alpha}m^\frac{n-\delta}{n}|\Omega|^\frac{\delta}{n} t^{\alpha} \label{eq:u_grad_v_ineq} \numberthis
	\end{align*}
	for all $t\in(0,1)$ and $\eps\in(0,1)$ due to the mass conservation property (\ref{eq:mass_conservation}). As $\lambda > \frac{\delta}{2} = \frac{n}{2}(\frac{n+\delta}{n} - 1)$ and $\frac{n+\delta}{n} = 1 + \frac{\delta}{n} \leq \frac{n}{2} + \delta$, we can then use \Cref{lemma:time_scaled_functional} to gain $K > 0$ such that 
	\[
		\int_0^t s^{\lambda} \int_\Omega \ue^\frac{n+\delta}{n}(x, s)|\grad \ve(x, s)|^2 \d x \d s \leq K
	\]
	for all $t\in(0,1)$ and $\eps\in(0,1)$, which combined with (\ref{eq:u_grad_v_ineq}) completes the proof.
\end{proof}\noindent
Using the fundamental theorem of calculus, the above property now almost immediately translates to uniform continuity of the first solution component of the approximate solutions in $t = 0$ in the vague topology. Due to the convergence properties already proven in \Cref{lemma:construction} as well as due to the properties of our initial data approximation, this then readily gives us our desired continuity property for $u$.
\begin{lemma}\label{lemma:u_continuity}
If $v_0$ satisfies (\ref{v_0_condition}) and $u$ is the function constructed in \Cref{lemma:construction}, then 
\[
	u(\cdot, t) \rightarrow u_0 \;\;\;\; \text{ in } \Mp	
\]
as $t \searrow 0$, where $\Mp$ is the set of positive Radon measures on $\overline{\Omega}$ with the vague topology.
\end{lemma}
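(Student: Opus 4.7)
The plan is to show that the approximate solutions $\ue$ are already uniformly-in-$\eps$ continuous at $t=0$ when tested against smooth test functions compatible with the Neumann boundary condition, and then to transport this uniform continuity through the limit $\eps = \eps_j \searrow 0$ provided by \Cref{lemma:construction}. To begin, I would fix $\psi \in C^\infty(\overline{\Omega})$ with $\grad \psi \cdot \nu = 0$ on $\partial\Omega$. Testing the first equation of (\ref{problem}) by $\psi$, integrating over $(0,t)\times\Omega$, and integrating by parts twice while exploiting the Neumann conditions on $\ue$, $\ve$ and $\psi$ then yields
\[
	\int_\Omega \ue(\cdot, t)\psi - \int_\Omega u_{0,\eps}\psi = \int_0^t \int_\Omega \ue \laplace \psi + \chi \int_0^t \int_\Omega \ue \grad \ve \cdot \grad \psi
\]
for all $t \in (0,\tmaxeps)$ and $\eps \in (0,1)$.

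Combining mass conservation (\ref{eq:mass_conservation}) with \Cref{lemma:continuity_taxis_bound} would then produce the uniform bound
\[
	\left| \int_\Omega \ue(\cdot, t) \psi - \int_\Omega u_{0,\eps}\psi \right| \leq m t\|\laplace\psi\|_\L{\infty} + \chi C \|\grad\psi\|_\L{\infty} t^\alpha
\]
valid for all $t\in(0,1)$ and $\eps\in(0,1)$. Sending $\eps = \eps_j \searrow 0$, the uniform convergence $\ue(\cdot, t) \to u(\cdot, t)$ on $\overline{\Omega}$ at fixed $t > 0$ from (\ref{eq:u_solution_convergence}) handles the first summand on the left-hand side, while $u_{0,\eps}\to u_0$ vaguely from (\ref{u0_props}) handles the initial-value summand. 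This produces $\int_\Omega u(\cdot, t)\psi \to \int_{\overline{\Omega}}\psi\,du_0$ as $t\searrow 0$ for every smooth Neumann-compatible $\psi$.

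Finally, I would extend this continuity to arbitrary $\psi \in C(\overline{\Omega})$ by a density argument. Since uniform convergence together with (\ref{eq:mass_conservation}) yields $\int_\Omega u(\cdot, t) = m = u_0(\overline{\Omega})$, approximating $\psi$ uniformly by a smooth Neumann-compatible function $\tilde\psi$ induces an error controlled by $2m\|\psi - \tilde\psi\|_\L{\infty}$ when comparing $\int_\Omega u(\cdot, t)\psi$ and $\int_{\overline{\Omega}}\psi\,du_0$ with their $\tilde\psi$ counterparts. The required uniform density of $\{\psi \in C^\infty(\overline{\Omega}) : \grad\psi\cdot\nu = 0 \text{ on } \partial\Omega\}$ in $C(\overline{\Omega})$ follows, for instance, from applying the Neumann heat semigroup $(e^{\tau\laplace})_{\tau > 0}$ at a small time to $\psi$, which regularizes it into a smooth function with vanishing normal derivative that still converges back uniformly as $\tau\searrow 0$. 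The main obstacle of the argument is thus already handled by \Cref{lemma:continuity_taxis_bound}: its bound $\int_0^t \|\ue \grad \ve\|_\L{1} \leq C t^\alpha$ is precisely what forces the taxis contribution to vanish uniformly in $\eps$ as $t\searrow 0$, and everything else reduces to routine bookkeeping once this estimate is in hand.
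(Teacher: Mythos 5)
Your proposal is correct and follows essentially the same route as the paper: the weak formulation tested against Neumann-compatible test functions, mass conservation plus \Cref{lemma:continuity_taxis_bound} for the uniform-in-$\eps$ smallness, the convergences from (\ref{u0_props}) and \Cref{lemma:construction} to pass to the limit, and density of smooth Neumann-compatible functions in $C^0(\overline{\Omega})$ via the heat semigroup. The only (cosmetic) difference is that you pass to the limit $\eps=\eps_j\searrow 0$ directly in the quantitative inequality at fixed $t$, while the paper organizes the same ingredients as an $\eta/3$-argument with a $t$-dependent choice of $\eps$; you also make explicit the mass identity $\int_\Omega u(\cdot,t)=m$ that the density step implicitly uses.
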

\begin{proof} Let $\eta > 0$ and $\phi \in C^2(\overline{\Omega})$ with $\grad \phi\cdot\nu = 0$ on $\partial \Omega$ be fixed, but arbitrary.
\\[0.5em]
Using the fundamental theorem of calculus, the first equation in (\ref{problem}), partial integration as well as \Cref{lemma:continuity_taxis_bound}, we gain $K > 0$ and $\alpha > 0$ such that
\begin{align*}
	\left| \int_\Omega \ue(\cdot, t)\phi - \int_\Omega u_{0, \eps}\phi \right| 
	&= \left| \int_0^t \int_\Omega \uet \phi \right| 
	= \left| \int_0^t \int_\Omega \left[ \laplace \ue - \chi \div (\ue \grad \ve) \right]\phi \right| \\
	&= \left| \int_0^t \int_\Omega \ue \laplace \phi + \chi \int_0^t\int_\Omega \ue \grad \ve \cdot \grad \phi \right| \\
	&\leq m \|\laplace \phi\|_\L{\infty} t + \chi \|\grad \phi\|_\L{\infty} \int_0^t\int_\Omega \|\ue\grad \ve\|_\L{1} \\
	&\leq  m \|\laplace \phi\|_\L{\infty} t + \chi K \|\grad \phi\|_\L{\infty} t^\alpha
\end{align*}
for all $t\in(0,1)$ and $\eps \in (0,1)$. Thus, we can fix $t_0 \in (0,1)$ such that 
\[
	\left| \int_\Omega \ue(\cdot, t)\phi - \int_\Omega u_{0, \eps}\phi \right| \leq \frac{\eta}{3}
\]
for all $t\in(0,t_0)$ and $\eps \in (0,1)$. Using the convergence properties from (\ref{u0_props}) as well as \Cref{lemma:construction}, we can then, for each $t\in (0,t_0)$, fix $\eps(t) \in (0,1)$ such that 
\[
	\left| \int_\Omega u_{\eps(t),0}\phi - \int_{\overline{\Omega}} \phi \d u_0  \right| \leq \frac{\eta}{3} \stext{ and } \left| \int_\Omega u(\cdot, t) \phi - \int_\Omega u_{\eps(t)}(\cdot, t)\phi \right| \leq \frac{\eta}{3}.
\]
Combining the above estimates, we then conclude that 
\begin{align*}
	\left| \int_\Omega u(\cdot, t)\phi - \int_{\overline{\Omega}} \phi \d u_0 \right| 
	&\leq \left| \int_\Omega u(\cdot, t) \phi - \int_\Omega u_{\eps(t)}(\cdot, t)\phi \right|\\
	&+ \left| \int_\Omega u_{\eps(t)}(\cdot, t)\phi - \int_\Omega u_{\eps(t), 0}\phi \right| \\
	&+\left| \int_\Omega u_{\eps(t),0}\phi - \int_{\overline{\Omega}} \phi \d u_0  \right| \leq \frac{\eta}{3} + \frac{\eta}{3} +\frac{\eta}{3} = \eta
\end{align*}
for all $t\in(0,t_0)$. Given that the set of all functions $\phi \in C^2(\overline{\Omega})$ with $\grad \phi \cdot \nu = 0$ on $\partial \Omega$ is dense in $C^0(\overline{\Omega})$ (as can be easily seen by using a similar approach to the one used to approximate the initial data $v_0$ in \Cref{section:approx_solutions}), this immediately implies our desired result.
\end{proof}\noindent
To now prove a corresponding continuity property in $t = 0$ for the second solution component, we will use semigroup methods combined with the baseline bounds established in \Cref{lemma:approx_exist} to facilitate a similar argument to the one laid out above. Notably, the following reasoning is somewhat streamlined by our choice of initial data approximation for $v_0$, which is inherently compatible with the action of the Neumann heat semigroup.
\begin{lemma}\label{lemma:v_continuity}
	If $v_0$ satisfies (\ref{v_0_condition}) and $v$ is the function constructed in \Cref{lemma:construction}, then 
	\[
		v(\cdot, t) \rightarrow v_0 \;\;\;\; \text{ in } L^p(\Omega) \text{ for all } p\in[1,\infty)
	\]
	as $t \searrow 0$.
\end{lemma}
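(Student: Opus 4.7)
The plan is to adapt the strategy of \Cref{lemma:u_continuity}: first establish uniform (in $\eps$) continuity of $\ve$ at $t=0$ in $L^1(\Omega)$, then combine this uniform continuity with the pointwise-in-$t$ convergence $\ve(\cdot, t) \to v(\cdot, t)$ provided by \Cref{lemma:construction} and with the initial data approximation (\ref{v0_continuity}). Because the uniform $L^\infty$ bounds $\|\ve(\cdot, t)\|_\L{\infty} \leq \|v_0\|_\L{\infty}$ and $\|v(\cdot, t)\|_\L{\infty} \leq \|v_0\|_\L{\infty}$ from (\ref{eq:mass_conservation}) and (\ref{eq:solution_upper_bound}) are already in hand, the standard interpolation
\[
	\|f\|_\L{p} \leq \|f\|_\L{1}^{1/p}\,\|f\|_\L{\infty}^{(p-1)/p}
\]
reduces matters to $L^1$ convergence, which is then upgraded to every $p\in[1,\infty)$ at the end.

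For the uniform $L^1$ continuity of $\ve$ at $t=0$, I would work from the variation-of-constants representation
\[
	\ve(\cdot, t) - v_{0,\eps} = (e^{t\laplace} - I)v_{0,\eps} - \int_0^t e^{(t-s)\laplace}(\ue \ve)(\cdot, s)\d s
\]
associated with the second equation of (\ref{problem}) and estimate the two contributions separately. The crucial structural observation is that our choice $v_{0,\eps} = e^{\eps\laplace} v_0$ gives $(e^{t\laplace} - I)v_{0,\eps} = e^{\eps\laplace}(e^{t\laplace} - I)v_0$, so that $L^1$-contractivity of the Neumann heat semigroup yields
\[
	\|(e^{t\laplace} - I)v_{0,\eps}\|_\L{1} \leq \|(e^{t\laplace} - I)v_0\|_\L{1},
\]
and the right-hand side vanishes as $t \searrow 0$ by strong continuity of $(e^{t\laplace})_{t \geq 0}$ on $L^1(\Omega)$, uniformly in $\eps$. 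For the Duhamel term, a further use of $L^1$-contractivity together with $\|\ve\|_\L{\infty} \leq \|v_0\|_\L{\infty}$ and the mass conservation $\|\ue(\cdot, s)\|_\L{1} = m$ from (\ref{eq:mass_conservation}) gives
\[
	\left\|\int_0^t e^{(t-s)\laplace}(\ue \ve)(\cdot, s)\d s\right\|_\L{1} \leq m\|v_0\|_\L{\infty}\,t,
\]
again uniformly in $\eps$.

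Given $\eta > 0$, I can therefore fix $t_0 \in (0,1)$ such that $\|\ve(\cdot, t) - v_{0,\eps}\|_\L{1} < \eta/3$ for all $t \in (0, t_0)$ and $\eps \in (0,1)$; for each such $t$ I then select $\eps(t) \in (0,1)$ small enough that $\|v_{0,\eps(t)} - v_0\|_\L{1} < \eta/3$ via (\ref{v0_continuity}) and $\|v(\cdot, t) - v_{\eps(t)}(\cdot, t)\|_\L{1} < \eta/3$ via (\ref{eq:v_solution_convergence}). A triangle inequality then produces $\|v(\cdot, t) - v_0\|_\L{1} \leq \eta$ on $(0, t_0)$, and the interpolation inequality above, combined with $\|v(\cdot, t) - v_0\|_\L{\infty} \leq 2\|v_0\|_\L{\infty}$, upgrades this to $L^p$-convergence for every $p \in [1,\infty)$. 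I do not anticipate any real obstacle here: the argument is essentially a streamlined analogue of \Cref{lemma:u_continuity}, made easier by the fact that the $L^\infty$ bound on $\ve$ is already uniform and the smoothing of $v_0$ is built into the definition of $v_{0,\eps}$, so that no time-weighted bound in the spirit of \Cref{lemma:continuity_taxis_bound} is needed to control the interaction term.
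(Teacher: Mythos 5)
Your proposal is correct and follows essentially the same route as the paper's proof: the variation-of-constants representation for $\ve$, the observation that $v_{0,\eps}=e^{\eps\laplace}v_0$ combined with $L^1$-contractivity reduces the semigroup term to $\|v_0-e^{t\laplace}v_0\|_\L{1}$, the bound $m\|v_0\|_\L{\infty}t$ for the Duhamel term via (\ref{eq:mass_conservation}), and then the triangle inequality with (\ref{v0_continuity}) and \Cref{lemma:construction} together with $L^1$--$L^\infty$ interpolation. The only difference is that you interpolate at the very end for $v-v_0$ while the paper interpolates the term $\|v_{0,\eps}-\ve(\cdot,t)\|_\L{p}$ before assembling the triangle inequality in $L^p$; this is an immaterial rearrangement.
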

\begin{proof}
	Let $\eta > 0$ be fixed, but arbitrary.
	\\[0.5em]
	Using the variation-of-constants representation of the second equation in (\ref{problem}) combined with the baseline bounds established in \Cref{lemma:approx_exist} as well as the fact that $v_{0,\eps} = e^{\eps \laplace}v_0$ by definition, we can now estimate as follows:
	\begin{align*}
		\|v_{0, \eps} - \ve(\cdot, t)\|_\L{1} &\leq \| v_{0, \eps} - e^{t\laplace} v_{0, \eps}  \|_\L{1} + \int_0^t\left\| e^{(t-s)\laplace} \ue(\cdot, s) \ve(\cdot, s) \right\|_\L{1} \d s \\
		&\leq \|e^{\eps\laplace}(v_0 - e^{t\laplace} v_0)\|_\L{1} +  \int_0^t \| \ue(\cdot, s) \ve(\cdot, s) \|_\L{1} \d s \\
		&\leq \|v_0 - e^{t\laplace} v_0\|_\L{1} + m \|v_0\|_\L{\infty} t
	\end{align*}
	for all $t > 0$ and $\eps \in (0,1)$. Due to the continuity properties of the Neumann heat semigroup in $t = 0$, the above allows us to find $t_0 > 0$ such that 
	\[
		\|v_{0, \eps} - \ve(\cdot, t)\|_\L{1} \leq 2^{1-p}\|v_0\|^{1-p}_\L{\infty} \frac{\eta^p}{3^p} 	
	\]
	for all $\eps \in (0,1)$ and $t \in (0,t_0)$. By application of the Hölder inequality, we then further gain that 
	\begin{align*}
		\|v_{0, \eps} - \ve(\cdot, t)\|_\L{p} 
		&\leq \|v_{0, \eps} - \ve(\cdot, t)\|^\frac{1}{p}_\L{1} \|v_{0, \eps} - \ve(\cdot, t)\|^\frac{p-1}{p}_\L{\infty} \\
		&\leq  2^\frac{p-1}{p} \|v_{0, \eps} - \ve(\cdot, t)\|^\frac{1}{p}_\L{1} \|v_0\|^\frac{p-1}{p}_\L{\infty} \\
		&\leq \frac{\eta}{3} \numberthis \label{eq:v_eta_third_1}
	\end{align*}
	for all $\eps \in (0,1)$ and $t \in (0,t_0)$.
	\\[0.5em]
	Using the convergence properties laid out in (\ref{v0_continuity}) as well as \Cref{lemma:construction}, we can, for each $t \in (0,t_0)$, find $\eps(t) \in (0,1)$ such that 
	\[
		\|v_0 - v_{0, \eps(t)}\|_\L{p} \leq \frac{\eta}{3} \stext{ and } \|v(\cdot, t) - v_{\eps(t)}(\cdot, t)\|_\L{p} \leq \frac{\eta}{3}.
	\]
	Combined with (\ref{eq:v_eta_third_1}), this gives us that 
	\begin{align*}
		\|v_0 - v(\cdot, t)\|_\L{p} 
		&\leq \|v_0 - v_{0, \eps(t)}\|_\L{p} + \|v_{0, \eps(t)} - v_{\eps(t)}(\cdot, t)\|_\L{p} + \| v_{\eps(t)}(\cdot, t) - v(\cdot, t)\|_\L{p} \\
		&\leq \frac{\eta}{3} + \frac{\eta}{3} + \frac{\eta}{3} = \eta
	\end{align*}
	for all $t\in (0,t_0)$. As $\eta > 0$ was arbitrary, this completes the proof.
\end{proof}
\noindent
Having at this point proven all parts of \Cref{theorem:main} individually, its proof can now be presented in a rather swift fashion.
\begin{proof}[Proof of \Cref{theorem:main}]
	Assume that $v_0$ satisfies (\ref{v_0_condition}). Then let $u,v \in C^{2,1}(\overline{\Omega}\times(0,\infty))$ be the functions constructed in \Cref{lemma:construction} under this assumption. According to the same lemma, $(u,v)$ is already a solution to (\ref{problem}). \Cref{lemma:u_continuity} and \Cref{lemma:v_continuity} then further ensure that the continuity properties (\ref{u_continuity}) and (\ref{v_continuity}) hold for $u$ and $v$ as well. With this, all necessary properties for $u$ and $v$ are proven.
\end{proof}

\section*{Acknowledgment} The author acknowledges support of the \emph{Deutsche Forschungsgemeinschaft} in the context of the project \emph{Fine structures in interpolation inequalities and application to parabolic problems}, project number 462888149.

\footnotesize

\end{document}